\newcommand{\gl}{{\mathfrak g \mathfrak l}}
\newcommand{\cx}{{\mathbb C}}
\newcommand{\diag}{\operatorname{diag}}
\newcommand{\tr}{\operatorname{tr}}
\newcommand{\rank}{\operatorname{rank}}
\newcommand{\Jac}{\operatorname{Jac}}
\newcommand{\D}{{\partial}}
\newcommand{\supp}{\operatorname{supp}}
\newcommand{\Mat}{\operatorname{Mat}}
\numberwithin{equation}{section}
\newtheorem{theorem}{Theorem}[section]
\newtheorem{lemma}[theorem]{Lemma}
\newtheorem{corollary}[theorem]{Corollary}
\newtheorem{proposition}[theorem]{Proposition}
\theoremstyle{remark}
\newtheorem{remark}[theorem]{Remark}
\newtheorem{definition}[theorem]{Definition}
\newtheorem{example}[theorem]{Example}
\renewcommand{\P}{{\mathbb{P}}}
\newcommand{\oC}{{\mathbb{C}}}
\newcommand{\oP}{{\mathbb{P}}}
\newcommand{\oZ}{{\mathbb{Z}}}
\newcommand{\sA}{{\mathcal{A}}}   
\newcommand{\sE}{{\mathcal{E}}}
\newcommand{\sF}{{\mathcal{F}}}
\newcommand{\sG}{{\mathcal{G}}}   
\newcommand{\sM}{{\mathcal{M}}}   
\newcommand{\sO}{{\mathcal{O}}}
\newcommand{\sR}{{\mathcal{R}}}
\newcommand{\sS}{{\mathcal{S}}}
\newcommand{\sU}{{\mathcal{U}}}
\newcommand{\sW}{{\mathcal{W}}}
\newcommand{\fH}{{\mathfrak{h}}}
\begin{document}

\title[Sheaves, resolutions, and coadjoint orbits of loop groups]{Sheaves on $\oP^1\times \oP^1$, bigraded resolutions,  and coadjoint orbits of loop groups}
\author{Roger Bielawski \& Lorenz Schwachh\"ofer}
\address{School of Mathematics\\
University of Leeds\\Leeds LS2 9JT\\ UK}
\address{Fakult\"at f\"ur Mathematik\\ TU Dortmund\\D-44221 Dortmund\\ Germany}


\begin{abstract} We construct a canonical linear resolution of acyclic $1$-dimensional sheaves on $\oP^1\times \oP^1$ and discuss the resulting natural Poisson structure.
\end{abstract}

\subjclass[2000]{58F07, 14F05, 14H40,  14H70, 22E67}
\maketitle

\thispagestyle{empty}

\section{Introduction}

The goal of this paper is to present a (yet another) variation on a theme developed by several authors, notably Moser, Adams, Harnad, Hurtubise, Previato \cite{Moser}, \cite{AHH1}--\cite{AHP}, and relating integrable systems, rank $r$ perturbations, spectral curves and their Jacobians, and coadjoint orbits of loop groups.
\par
Let us briefly recall that, given  matrices $A,Y,F,G$ of size, respectively, $N\times N$, $r\times r$, $N\times r$, and $r\times N$, one defines a $\gl_r(\cx)$-valued rational map
\begin{equation} Y+G(A-\lambda)^{-1}F,\label{rat1}\end{equation}
i.e. an element of the loop algebra $\widetilde{\gl}(r)^-$, consisting of loops extending holomorphically to the outside of some circle $S^1\subset \cx$. This determines a (shifted) reduced coadjoint orbit in $\widetilde{\gl}(r)^-$ (see Remark \ref{reduced} for a definition). On the other hand, the polynomial \eqref{rat1} also determines (generically) a  curve $S$ and a line bundle $L$ of degree $g+r-1$: the curve is defined as the spectrum of \eqref{rat1}, and $L$ is the dual of the eigenbundle of \eqref{rat1}. This  describes $S$ as an affine curve in $\cx^2$, and the isospectral flows, corresponding to Hamiltonians on the space of rank $r$ perturbations, linearise on the Jacobian of the projective model of $S$. 
\par
In fact, as shown by Adams, Harnad, and Hurtubise \cite{AHH1,AHH2}, it is more convenient to  compactify $S$ inside a Hirzebruch surface $F_d$, $d\geq 1$.  This results in singularities, which may be partially resolved, but it gives a particularly nice description of $\Jac^0(S)$, i.e. of the flow directions.
\par
In this paper, we consider a different compactification of $S$, namely inside  $\oP^1\times \oP^1$ and defined as 
\begin{equation} S=\left\{(z,\lambda)\in \oP^1\times \oP^1;\, \det\begin{pmatrix} Y-z & G\\ F & A-\lambda\end{pmatrix}=0\right\}.\label{bigS}\end{equation}
 This is a very natural thing to do, but we know of only one occurence in the literature:  the paper of Sanguinetti and Woodhouse \cite{SW} (we are grateful to Philip Boalch for this reference). In that paper, in addition to other results, the authors use the above compactification to give a nice picture of the duality phenomenon discussed in \cite{AHH3}. Our application is to another subtlety of the rank $r$ perturbation isospectral flow: the fact that the flow  may leave the set where $\rank F=\rank G=r$, without becoming singular. More precisely, we have:
\begin{theorem} Let $S$ be a smooth curve in $\oP^1\times \oP^1$, defined by \eqref{bigS} and corresponding to a (shifted) rank $r$ perturbation of the matrix $A$ ($r\leq N$). A line bundle $L\in \Jac^{g-r+1}(S)$ corresponds to $(A,Y,F,G)$ with $\rank F=\rank G=r$ if and only if $L$ satisfies:
$$ H^0(S,L(0,-1))= H^1(S,L(0,-1))=0,\enskip H^0(S,L(-1,0))=0, \enskip  H^1(S,L(1,-2))=0.$$
\label{Thm1}\end{theorem}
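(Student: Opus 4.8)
The plan is to realise the correspondence between $L$ and the quadruple $(A,Y,F,G)$ through the linear resolution that is the subject of this paper, and then to translate each of the four cohomological conditions into a rank statement about the blocks of the presenting matrix. First I would pass from $L$ to the $1$-dimensional sheaf $\sE$ on $X=\oP^1\times\oP^1$ obtained by pushing forward the appropriate normalisation of $L$; writing $\iota\colon S\hookrightarrow X$, I take $\sE:=\iota_*L(1,0)$, the twist being fixed by requiring $\chi(\sE)=r+N$ with support of bidegree $(r,N)$. Since $\sE(-1,-1)=\iota_*L(0,-1)$, the two conditions $H^0(S,L(0,-1))=H^1(S,L(0,-1))=0$ say exactly that $\sE$ is acyclic in the sense required by the resolution theorem. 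Invoking that theorem produces a canonical linear resolution
\begin{equation*}
0\longrightarrow \sO(-1,0)^{a}\oplus \sO(0,-1)^{b}\xrightarrow{\ M\ }\sO^{\,a+b}\longrightarrow \sE\longrightarrow 0 ,
\end{equation*}
and matching Hilbert polynomials (the bidegree $(r,N)$ of the support together with $\chi(\sE)=r+N$) forces $a=r$, $b=N$. With respect to the two summands the $r\times r$ block of $M$ is linear in $z$ and the $N\times N$ block linear in $\lambda$, so after the canonical normalisation $M$ reads
\begin{equation*}
M=\begin{pmatrix} z_0Y-z_1 I_r & \lambda_0 G\\ z_0 F & \lambda_0 A-\lambda_1 I_N\end{pmatrix},
\end{equation*}
whose affine determinant is \eqref{bigS}; this is how $(A,Y,F,G)$ is read off from $L$.

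For the forward implication I would start from full-rank data, write down $M$ and $\sE=\Coker M$ directly, and compute the cohomology of each twist $L(0,-1)$, $L(-1,0)$, $L(1,-2)$ — equal to that of $\sE(-1,-1)$, $\sE(-2,0)$, $\sE(0,-2)$ — from the long exact sequence of the two–term complex, using the K\"unneth description of line-bundle cohomology on $\oP^1\times\oP^1$. The statements about $L(0,-1)$ fall out of $\chi(\sE(-1,-1))=0$ together with the fact that $M$ is an isomorphism off $S$. The remaining two are more delicate: with $V,W$ the source and target of $M$, one finds $H^0(\sE(-2,0))=\Ker\bigl(H^1(V(-2,0))\to H^1(W(-2,0))\bigr)$ and $H^1(\sE(0,-2))=\Coker\bigl(H^1(V(0,-2))\to H^1(W(0,-2))\bigr)$, the relevant $H^1(V)$ coming solely from the summand $\sO(-3,0)^r$, respectively $\sO(0,-3)^N$. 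A direct calculation then identifies the injectivity of the first connecting map with $\rank F=r$ and the surjectivity of the second with $\rank G=r$.

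The reverse implication runs this computation backwards. Given $L$ satisfying all four conditions, acyclicity (conditions one and two) licenses the resolution theorem and produces $M$, hence $(A,Y,F,G)$, as above. If $\rank F<r$ (respectively $\rank G<r$), the corresponding connecting map ceases to be injective (respectively surjective), so one of $H^0(L(-1,0))$, $H^1(L(1,-2))$ becomes nonzero, contradicting the hypotheses; thus $\rank F=\rank G=r$.

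The hard part will be the precise dictionary between the two asymmetric conditions and the ranks of $F$ and $G$. One must verify not merely that full rank yields the vanishing, but that the vanishing forces full rank, and this requires showing that at the bidegrees $(-2,0)$ and $(0,-2)$ the contributions of the diagonal blocks $z_0Y-z_1$ and $\lambda_0A-\lambda_1$ drop out of the connecting maps, leaving them governed \emph{only} by $F$ and $G$ respectively. A secondary obstacle is to confirm that under conditions one and two the sheaf $\sE$ is genuinely the push-forward of a line bundle — pure of dimension one, with no zero-dimensional torsion — so that the canonical resolution has exactly the two rows of generators claimed and no spurious summands; here the smoothness of $S$ together with acyclicity are what guarantee purity and pin down $a=r$, $b=N$.
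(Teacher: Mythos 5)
Your proposal is correct, and it shares the paper's overall frame --- realise $L$ through the canonical linear resolution of Theorem \ref{acyclic}, normalise $M$ using $(\infty,\infty)\notin\supp\sF$ to read off $(A,Y,F,G)$, and convert the two asymmetric vanishing conditions into rank conditions on the off-diagonal blocks --- but the decisive step is done by a different mechanism. The paper (in the proposition of the final section, with $k=r$, $l=N$) introduces the sheaves $\sW_1,\sW_2$ on $\oP^1$ defined as cokernels of the two block columns of $M$, identifies full rank of the off-diagonal block with the splitting type $\sW_1\simeq\sO(1)^{\oplus k}\oplus\sO^{\oplus(l-k)}$, i.e.\ with $H^0(\sW_1(-2))=0$ and $H^1(\sW_2(-2))=0$, using the decomposition of sheaves on $\oP^1$ into line bundles plus torsion, and then transports these to $H^0(\sE(-2,0))=0$ and $H^1(\sE(0,-2))=0$ via the auxiliary resolutions \eqref{R1} and \eqref{R2}. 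You instead read $H^0(\sE(-2,0))$ and $H^1(\sE(0,-2))$ directly off the long exact sequence of the canonical two-term resolution: after twisting, only the summands $\sO(-3,0)^{\oplus r}$, resp.\ $\sO(0,-3)^{\oplus N}$, carry cohomology, so the connecting maps are induced by the single block columns $(Y-z,F)^T$ and $(G,A-\lambda)^T$; Serre duality on the relevant $\oP^1$ factor turns these maps into $(u,v)\mapsto(Y^Tu+F^Tv,-u)$ and $(u,v)\mapsto(G^Tu+A^Tv,-v)$, whose surjectivity/injectivity visibly depends only on $F$ and $G$ since the diagonal-block contribution can be absorbed --- exactly the point you flag as the ``hard part,'' and it does go through. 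Both routes reduce to the same linear algebra; yours is shorter and avoids the splitting theorem and the extra resolutions, while the paper's yields structural by-products (the splitting type of $\sW_1$, the resolutions \eqref{R1}--\eqref{R2}) that it exploits in the subsequent remark. One small point in your favour: your normalisation $\chi(\sE)=r+N$ forces $\deg L=g+r-1$, which is what acyclicity of $L(0,-1)$ demands; the exponent $g-r+1$ in the statement of Theorem \ref{Thm1} appears to be a slip for $g+r-1$, and your choice is the internally consistent one.
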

\par
We are interested in more than line bundles on smooth curves in $\oP^1\times \oP^1$. The above approach generalises to acyclic (i.e. semistable) $1$-dimensional sheaves on $\oP^1\times \oP^1$, with a fixed bigraded Hilbert polynomial. In Sections 2 and 3, we construct a natural linear resolution of such a sheaf,  very much in the spirit of Beauville \cite{Beau2}. This gives us  a linear polynomial matrix $M(z,\lambda)$  (up to certain group action).  If the support of the sheaf is a smooth curve of bidegree  $(r,N)$, then the matrix has size $r\times N$.  As long as the point $(\infty,\infty)$ does not belong to the support of the sheaf, then matrices $M(z,\lambda)$ can be identified with the quadruples $A,Y,F,G$. The space $\sM(k,l)$ of the $(A,Y,F,G)$  has a natural Poisson structure, obtained by identifying it with $\gl_N(\cx)^\ast\oplus \gl_r(\cx)^\ast\oplus T^\ast M_{N\times r}(\cx)$. Thus we obtain a Poisson structure on the quotient of an open subset of $\sM(N,r)$ by $GL_N(\cx)\times GL_r(\cx)$. The (generic) symplectic leaves are known, from \cite{AHP,AHH1}, to be reduced coadjoint orbits of loop groups. Our aim is to describe these symplectic leaves directly in terms of sheaves on $\oP^1\times \oP^1$. We show that they correspond to symplectic leaves of a particular  Mukai-Tyurin-Bottacin Poisson structure \cite{Mu,Tyu,Bo,HL,HM1,HM2} on the moduli space  $M_Q(r,N)$ of simple sheaves on $\oP^1\times \oP^1$ with (bigraded) Hilbert polynomial $Nx+ry$. The surface $Q=\oP^1\times \oP^1$ is an example of a {\em Poisson surface} \cite{Bo}, and consequently, for every choice of a Poisson structure on $Q$,  i.e. a section $s$ of the anticanonical bundle $K_Q^\ast\simeq \sO(2,2)$, one obtains a Poisson structure on $M_Q(r,N)$ as a map 
$$T_{[\sF]}^\ast M_Q(r,N)\simeq {\rm Ext}^1_Q(\sF,\sF\otimes K_Q)\stackrel{\cdot s}{\longrightarrow} {\rm Ext}^1_Q(\sF,\sF)\simeq T_{[\sF]}M_Q(r,N).$$
We show that the (generic) symplectic leaves $\gl_N(\cx)^\ast\oplus \gl_r(\cx)^\ast\oplus T^\ast M_{N\times r}(\cx)$, i.e. reduced coadjoint orbits in $\widetilde{\gl}(r)^-$, are the symplectic leaves of the Mukai-Tyurin-Bottacin structure corresponding to $s(z,\lambda)=1$, i.e. to the anticanonical divisor $2\bigl(\{\infty\}\times \oP^1+\oP^1\times\{\infty\}\bigr)$.

\bigskip

{\it Acknowledgement.} We thank Jacques Hurtubise for comments and Philip Boalch for the reference \cite{SW}.

\section{Acyclic sheaves on $\oP^1\times \oP^1$ and their resolutions}

\begin{definition} Let $X$ be a complex manifold and let $\sF$ be a coherent sheaf on $X$. 
\begin{itemize}
\item[(i)] The {\em support} of $\sF$ is the complex subspace  $\supp \sF$ of $X$ defined as the zero-locus of the annihilator (in $\sO_X$) of $\sF$. The dimension $\dim \sF$ of $\sF$ is the dimension of its support. 
\item[(ii)] $\sF$ is {\em pure}, if $\dim \sE=\dim \sF$ for all non-trivial coherent subsheaves $\sE\subset \sF$.
\item[(iii)] $\sF$ is {\em acyclic} if $H^\ast(\sF)=0$.
\end{itemize}\label{CM}
\end{definition}

\begin{remark} In the case of  $1$-dimensional sheaves on a smooth surface $X$, purity of $\sF$ means that, at  at every point $x\in \supp \sF$, the skyscraper sheaf $\cx_x$ does not embed into $\sF_x$. 
In addition, a  $1$-dimensional sheaf $\sF$ on a smooth surface $X$  is pure if and only if it is {\em reflexive}, i.e. after performing the duality $\sF\mapsto \sE{\it xt}^1_X(\sF,K_X)$ twice, we obtain back $\sF$ (up to isomorphism) (see \cite[\S 1.1]{HL}).\label{pure}
\end{remark}

In the remainder of the paper, {\bf all sheaves are coherent}.

\medskip

We shall now consider sheaves on $\oP^1\times \oP^1$.
 For any $p,q\in \oZ$ we denote by $\sO(p,q)$ the line bundle $\pi_1^\ast \sO(p)\otimes \pi_2^\ast\sO(q)$, where $\pi_1,\pi_2:\oP^1\times \oP^1\to \oP^1$ are the two projections. We shall also denote by $\zeta$ and $\eta$ the two affine coordinates on $\oP^1\times \oP^1$.
\par
Let $\sF$ be a sheaf on $\oP^1\times \oP^1$. Associated to $\sF$ is its {\em bigraded Hilbert polynomial}
\begin{equation} P_{\sF}(x,y)=\sum_{x,y\in \oZ} \chi(\sF(x,y)).\end{equation}
The sheaf $\sF$ is $1$-dimensional if and only if $P_{\sF}$ is linear.

We begin by describing a canonical resolution of acyclic  $1$-dimensional sheaves  on $\oP^1\times \oP^1$.

\begin{theorem}  Let $\sF$ be a 1-dimensional acyclic sheaf on $\oP^1\times \oP^1$. Then  $\sF$ has a linear resolution by locally free sheaves of the form
\begin{equation}0\to \sO(-2,-1)^{\oplus k}\oplus \sO(-1,-2)^{\oplus l}\stackrel{M(\zeta,\eta)}{\longrightarrow}\sO(-1,-1)^{\oplus (k+l)}\to \sF\to 0, \label{F}\end{equation}
for some $k,l\geq 0$.
\par
Conversely, any $\sF$ defined as cokernel of a map $M(\zeta,\eta)$ as above with $\det M(\zeta,\eta)\not\equiv 0$ is acyclic and $1$-dimensional. 
\label{acyclic}\end{theorem}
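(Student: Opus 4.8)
The plan is to read off \eqref{F} from the Beilinson spectral sequence on $\oP^1\times\oP^1$ applied to the twist $\sG:=\sF(1,1)$, the key point being that acyclicity of $\sF$ is exactly the vanishing that collapses it. Write $Q=\oP^1\times\oP^1$ and let $p_1,p_2$ be the two projections of $Q\times Q$. Tensoring the two Koszul resolutions of the diagonals of the factors gives the resolution of the diagonal $\Delta\subset Q\times Q$,
\[0\to\sO(-1,-1)\boxtimes\sO(-1,-1)\to\bigl(\sO(-1,0)\boxtimes\sO(-1,0)\bigr)\oplus\bigl(\sO(0,-1)\boxtimes\sO(0,-1)\bigr)\to\sO\boxtimes\sO\to\sO_\Delta\to0,\]
where $\sA\boxtimes\sB:=p_1^\ast\sA\otimes p_2^\ast\sB$. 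Applying $Rp_{2\ast}(p_1^\ast\sG\otimes-)$ and the projection formula to $\sG\cong Rp_{2\ast}(p_1^\ast\sG\otimes\sO_\Delta)$ produces a spectral sequence converging to $\sG$ in total degree $0$, whose three columns $s=0,-1,-2$ have $E_1$-terms $H^t(Q,\sG\otimes\sA^{(s)})\otimes\sB^{(s)}$ built from the cohomology of $\sF(1,1)$, of $\sF(0,1)$ and $\sF(1,0)$, and of $\sF$ respectively. Because $\sF$ is $1$-dimensional, $H^t$ of every twist vanishes for $t\ge2$, leaving only the rows $t=0,1$; because $\sF$ is acyclic, the whole column $s=-2$ vanishes.

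The heart of the argument is to kill the row $t=1$ as well, i.e. to prove
\[H^1\bigl(\sF(1,0)\bigr)=H^1\bigl(\sF(0,1)\bigr)=H^1\bigl(\sF(1,1)\bigr)=0.\]
I would obtain this by propagating $H^1(\sF)=0$ to positive twists through restriction to fibres. For a general fibre $F$ of $\pi_1$ or $\pi_2$ we have $F\not\subset\supp\sF$, so $\sF(a,b)|_F$ is supported on the finite set $F\cap\supp\sF$; being $0$-dimensional it has no $H^1$. Tensoring $0\to\sO(-1,0)\to\sO\to\sO_F\to0$ and its $\eta$-analogue with suitable twists of $\sF$ and passing to the long exact sequence then bootstraps the vanishing upward: first $H^1(\sF(1,0))=H^1(\sF(0,1))=0$, and then, reusing these, $H^1(\sF(1,1))=0$. (The only subtlety is the kernel of multiplication by the fibre equation, which for general $F$ is $0$-dimensional and hence invisible to the relevant $H^1$.) This cohomological propagation is the step I expect to be the genuine obstacle; the rest is formal.

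Once the $t=1$ row and the $s=-2$ column are gone, only $E_1^{-1,0}$ and $E_1^{0,0}$ survive, both in the single row $t=0$; the spectral sequence degenerates and its two surviving terms form a resolution
\[0\to H^0(\sF(0,1))\otimes\sO(-1,0)\oplus H^0(\sF(1,0))\otimes\sO(0,-1)\to H^0(\sF(1,1))\otimes\sO\to\sF(1,1)\to0.\]
Twisting by $\sO(-1,-1)$ yields \eqref{F} with $k=h^0(\sF(0,1))$ and $l=h^0(\sF(1,0))$. The entries of $M$ lie in $\Hom(\sO(-2,-1),\sO(-1,-1))=H^0(\sO(1,0))$ and $\Hom(\sO(-1,-2),\sO(-1,-1))=H^0(\sO(0,1))$, so $M(\zeta,\eta)$ is linear; and since the cokernel is $1$-dimensional, equality of generic ranks forces $k+l=h^0(\sF(1,1))$, so $M$ is square.

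For the converse I would read everything off \eqref{F} directly. Each of $\sO(-1,-1)$, $\sO(-2,-1)$, $\sO(-1,-2)$ contains an $\sO(-1)$ factor on one $\oP^1$, so by Künneth all of their cohomology vanishes; the long exact sequence of \eqref{F} then gives $H^\ast(\sF)=0$, so $\sF$ is acyclic. Finally $\det M$ is a section of $\sO(k,l)$, so $\det M\not\equiv0$ means $M$ is an injective morphism of locally free sheaves of equal rank whose cokernel $\sF$ is supported on the bidegree-$(k,l)$ curve $\{\det M=0\}$; hence $\sF$ is $1$-dimensional.
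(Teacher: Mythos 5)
Your proof is correct, but it reaches \eqref{F} by a genuinely different route than the paper. The paper works with the bigraded section module ${\bf \Gamma_\ast}(\sF(1,1))|_{\geq 0}$ over the Cox ring of $\oP^1\times\oP^1$: it invokes the multigraded Castelnuovo--Mumford regularity results of Maclagan--Smith to get that the truncated module is generated in degree $(0,0)$ (surjectivity of $H^0(\sE)\otimes H^0(\sO(p,q))\to H^0(\sE(p,q))$), takes a length-one free resolution using that $\sE$ has projective dimension $1$, and then pins down the twists $(-p_i,-q_i)$ one constraint at a time: acyclicity of $\sE(-1,-1)$ forces $p_iq_i=0$, the counts $h^0(\sE(-1,0))=k$ and $h^0(\sE(0,-1))=l$ force $\sum p_i=k$ and $\sum q_i=l$, and $h^1(\sE)=0$ caps each exponent at $1$. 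You instead run the Beilinson spectral sequence coming from the Koszul resolution of the diagonal of $\oP^1\times\oP^1$, so the shape of the resolution and the identifications $k=h^0(\sF(0,1))$, $l=h^0(\sF(1,0))$ come out automatically once the unwanted $E_1$-terms vanish; the price is that you must prove $H^1(\sF(1,0))=H^1(\sF(0,1))=H^1(\sF(1,1))=0$ yourself, which you do correctly by the standard bootstrap along general fibres (the kernel and cokernel of multiplication by the fibre equation are $0$-dimensional for a general fibre, hence have no $H^1$ or $H^2$). A small bonus of your argument is that it never needs purity of $\sF$, which the paper's proof quietly uses (though purity is in fact automatic: a $0$-dimensional subsheaf of an acyclic sheaf would inject its nonzero $H^0$ into $H^0(\sF)=0$). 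Your treatment of the converse coincides with the paper's: K\"unneth kills all cohomology of the three bundles in \eqref{F}, and $\det M\not\equiv 0$ makes $M$ injective with $1$-dimensional cokernel.
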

\begin{remark} Let $\sF$ be a $1$-dimensional acyclic sheaf on $\oP^1\times \oP^1$ with $P_{\sF}(x,y)=lx+ky$. Then $\sF$ is semistable
 with respect to  $\sO(1,1)$.\end{remark}
\begin{remark} This resolution is canonical, but not necessarily {\em minimal}, in the sense of being obtained from the minimal resolution of the bigraded module
$\bigoplus_{i,j\in \oZ}H^0(\sF(i,j))$. 
\end{remark}
\begin{proof} Let $h^0(\sF(0,1))=k$ and $h^0(\sF(1,0))=l$, so that $P_{\sF}=lx+ky$. Let $\sE=\sF(1,1)$, and let ${\bf \Gamma_\ast}(\sE)= \bigoplus_{i,j\in \oZ}H^0(\sE(i,j))$ be the associated bigraded module over the bigraded ring ${\bf S}=\bigoplus_{i,j\in \oZ}H^0(\oP^1\times \oP^1,\sO(i,j))$. Furthermore, let ${\bf \Gamma_\ast}(\sE)|_{\geq 0}= \bigoplus_{i,j\geq 0}H^0(\sE(i,j))$ be its truncation. Owing to \cite[Lemma 6.8]{McL-S}, the sheaf associated to ${\bf \Gamma_\ast}(\sE)|_{\geq 0}$ is again $\sE$. Moreover,  \cite[Theorem 6.9]{McL-S} implies, as $\sE(-1,-1)$ is acyclic, that  the natural map
$$  H^0(\sE)\otimes H^0(\oP^1\times \oP^1,\sO(p,q))\longrightarrow H^0(\sE(p,q))$$
is surjective for any $p,q\geq 0$. Therefore, we have a surjective homomorphism
$$ {\bf S}^{\oplus(k+l)}\rightarrow {\bf \Gamma_\ast}(\sE)|_{\geq 0}\to 0$$
of bigraded ${\bf S}$-modules. Since $\sE$ is of pure dimension $1$, its projective dimension is $1$, and, hence, the above homomorphism extends to a linear free resolution  
$$ 0\to \bigoplus_{i=1}^{k+l}{\bf S}(-p_i,-q_i)\to  \bigoplus_{i=1}^{k+l}{\bf S}\rightarrow {\bf \Gamma_\ast}(\sE)|_{\geq 0}\to 0,$$
where $p_i,q_i\geq 0$ and $p_i+q_i>0$ for each $i$.
The corresponding sheaves on $\oP^1\times \oP^1$ give us a locally free resolution of $\sE$: 
\begin{equation} 0\to  \bigoplus_{i=1}^{k+l}\sO(-p_i,-q_i)\to \bigoplus_{i=1}^{k+l}\sO\to \sE\to 0.\label{F15}\end{equation}
Since  $H^\ast(\sE(-1,-1))=0$, either $p_i=0$ or $q_i=0$ for every $i$. Since $h^0(\sE(-1,0))=k$, we deduce, after tensoring \eqref{F15} with $\sO(-1,0)$, that $\sum p_i=k$. Similarly $\sum q_i=l$. Since $h^1(\sE)=0$, none of the $p_i$ or $q_i$ can be greater than $1$, and so,  all nonzero $p_i$ and all nonzero $q_i$ are equal to $1$. This proves the existence of resolution \eqref{F}. 
\par
Conversely, if $\sF$ admits a resolution of the form \eqref{F}, then it is $1$-dimensional. The long exact cohomology sequence implies that $\sF$ is acyclic.
\end{proof}

Let us write $n=k+l$.
The polynomial matrix $M(\zeta,\eta)$ in \eqref{F15} has size $ n\times n$ and is of the form
\begin{equation} \begin{pmatrix} A_0+A_1\zeta & B_0 + B_1\eta
                 \end{pmatrix},\label{M11}
\end{equation}
with $A_0,A_1\in \Mat_{n,k}(\oC)$, $B_0,B_1\in \Mat_{n,l}(\oC)$. 
Let us denote by $\sA(k,l)$ the space of such matrices with nonzero determinant. The group $GL_{n}(\cx)\times GL_k(\cx)\times GL_l(\cx)$ acts on $\sM(k,l)$ via:
\begin{equation} (g,h_1,h_2).\begin{pmatrix}
                        A(\zeta) & B(\eta)\end{pmatrix} = g\begin{pmatrix}
                        A(\zeta) & B(\eta) \end{pmatrix}\begin{pmatrix} h_1^{-1} & 0\\ 0 & h_2^{-1}\end{pmatrix},
\label{action}\end{equation}
and we can restate Theorem \ref{acyclic} as follows:
\begin{corollary} There exists a natural bijection between 
 \begin{itemize}
\item[(a)] isomorphism classes of $1$-dimensional acyclic sheaves $\sF$ on $\oP^1\times \oP^1$ such that $h^0(\sF(0,1))=k$, $h^0(\sF(1,0))=l$,\\
{\bf and}
\item[(b)] orbits of $GL_{k+l}(\cx)\times GL_k(\cx)\times GL_l(\cx)$ on $\sA(k,l)$. \hfill $\Box$
\end{itemize} \label{acyclic2}
\end{corollary}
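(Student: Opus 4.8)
The plan is to read Corollary~\ref{acyclic2} as the statement that the operations ``take the cokernel of a resolution matrix'' and ``take a linear resolution of a sheaf'' are mutually inverse, once the ambiguity in the resolution \eqref{F} has been identified with the group action \eqref{action}. So I would construct the two maps, check that each is well defined on orbits, respectively on isomorphism classes, and then verify that they compose to the identity in both directions.

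In one direction, to a matrix $M\in\sA(k,l)$ I associate $\sF:=\Coker M$. By the converse part of Theorem~\ref{acyclic} this is $1$-dimensional and acyclic. To see that $\sF$ lies in class (a) with the correct invariants, I would tensor \eqref{F} by $\sO(0,1)$ and pass to the long exact cohomology sequence: writing $P_0=\sO(-1,-1)^{\oplus(k+l)}$ and $P_1=\sO(-2,-1)^{\oplus k}\oplus\sO(-1,-2)^{\oplus l}$, the terms $H^\ast(P_0(0,1))=H^\ast(\sO(-1,0))^{\oplus(k+l)}$ vanish, while $H^1(P_1(0,1))=H^1(\sO(-2,0))^{\oplus k}\oplus H^1(\sO(-1,-1))^{\oplus l}=\cx^{\oplus k}$ by Künneth; hence $h^0(\sF(0,1))=k$, and symmetrically $h^0(\sF(1,0))=l$. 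In the other direction, Theorem~\ref{acyclic} attaches to $\sF$ a resolution \eqref{F}; reading the entries of $M$ through $\Hom(\sO(-2,-1),\sO(-1,-1))=H^0(\sO(1,0))$ and $\Hom(\sO(-1,-2),\sO(-1,-1))=H^0(\sO(0,1))$ puts $M$ in the form \eqref{M11}, so that $M\in\sA(k,l)$.

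The heart of the matter is to show that the resolution \eqref{F} is unique up to \eqref{action}. Let $0\to P_1\stackrel{M}{\to} P_0\stackrel{\pi}{\to}\sF\to 0$ and $0\to P_1\stackrel{M'}{\to} P_0\stackrel{\pi'}{\to}\sF\to 0$ be two such resolutions. I would first compute, by Künneth applied to $P_0^\vee\otimes P_1=\sO(-1,0)^{\oplus k(k+l)}\oplus\sO(0,-1)^{\oplus l(k+l)}$,
$$\Hom(P_0,P_1)=H^0(P_0^\vee\otimes P_1)=0,\qquad \Ext^1(P_0,P_1)=H^1(P_0^\vee\otimes P_1)=0.$$
The vanishing of $\Ext^1(P_0,P_1)=\Ext^1(P_0,\ker\pi')$ shows that $\pi$ lifts through the surjection $\pi'$ to some $g\colon P_0\to P_0$ with $\pi'g=\pi$, while the vanishing of $\Hom(P_0,P_1)=\Hom(P_0,\ker\pi')$ shows this lift is unique; the symmetric construction produces a two-sided inverse, so $g\in\Aut(P_0)=GL_{k+l}(\cx)$. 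Then $gM$ and $M'$ are injections of $P_1$ with the common image $\ker\pi'$, so they differ by some $h\in\Aut(P_1)$. Since
$$\Hom(\sO(-2,-1),\sO(-1,-2))=H^0(\sO(1,-1))=0=H^0(\sO(-1,1))=\Hom(\sO(-1,-2),\sO(-2,-1)),$$
the endomorphism ring of $P_1$ is block diagonal and $\Aut(P_1)=GL_k(\cx)\times GL_l(\cx)$; writing $h=\diag(h_1,h_2)$ yields $M'=gMh^{-1}=g\,M\,\diag(h_1^{-1},h_2^{-1})$, which is exactly \eqref{action}.

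Finally I would assemble the two maps: $\alpha\colon\sF\mapsto[M]$, the orbit of any resolution matrix, well defined by the uniqueness just shown; and $\beta\colon[M]\mapsto[\Coker M]$, well defined because orbit-equivalent matrices have isomorphic cokernels (the source and target automorphisms descend to the cokernel) and because $\Coker M$ lies in (a) by the second paragraph. Then $\beta\alpha=\mathrm{id}$, since a resolution matrix of $\sF$ has cokernel $\sF$, and $\alpha\beta=\mathrm{id}$, since $M$ is itself a resolution of $\Coker M$ and its orbit is thus returned; hence $\alpha$ and $\beta$ are mutually inverse bijections between (a) and (b). The one genuinely delicate point is the \emph{uniqueness} of the lift $g$: it is what upgrades the tautological comparison of two resolutions ``up to homotopy'' to an honest automorphism landing in the prescribed group, and it rests precisely on the simultaneous vanishing of $\Hom(P_0,P_1)$ and $\Ext^1(P_0,P_1)$.
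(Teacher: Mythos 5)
Your proof is correct and follows the route the paper intends: the corollary is stated there as an immediate restatement of Theorem \ref{acyclic} with no written proof, and your argument simply supplies the omitted verification, with the key point being the uniqueness of the resolution up to the action \eqref{action}, which you correctly reduce to the vanishing of $\Hom(P_0,P_1)$ and $\Ext^1(P_0,P_1)$ together with the block-diagonality of $\End(P_1)$. All the K\"unneth computations check out, so there is nothing to add.
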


For a sheaf define by \eqref{F}, we can describe its support as follows.  As a set, the support of $\sF$ is 
$$ S=\{(\zeta,\eta)\in \oP^1\times \oP^1;\; \det M(\zeta,\eta)=0\}.$$ 
Let us write $\det M(\zeta,\eta)=\prod_{i=1}^s q_i(\zeta,\eta)^{k_i}$, where $q_i$ are irreducible polynomials. We define the {\em minimal polynomial} $p_M(\zeta,\eta)$ of $M$ as
$\prod_{i=1}^s q_i(\zeta,\eta)^{r_i}$, where 
\begin{eqnarray*}r_i & = & \max\{a_ib_i;\; \text{at a generic point, $M(\zeta,\eta)$ has a Jordan block of size $a_i$}\\ & & \text{with eigenvalue $q_i(\zeta,\eta)^{b_i}$}\}.\end{eqnarray*}
Then: 
\begin{proposition} 
The support of $\sF$ is the curve  $\bigl(S,\sO_{\oP^1\times \oP^1}/(p_M)\bigr)$.\hfill$\Box$
\label{S}
\end{proposition}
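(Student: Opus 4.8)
The statement to prove is the equality of ideal sheaves $\operatorname{Ann}\sF=(p_M)$, where $(p_M)=\sO_{\oP^1\times\oP^1}\bigl(-\sum_i r_iC_i\bigr)$ is the ideal of the effective divisor cut out by the minimal polynomial and $C_i=\{q_i=0\}$. The plan is to check this in codimension one, where it reduces to a computation over a discrete valuation ring, and then to upgrade the codimension-one information to a global equality of ideals using two structural facts about $\sF$. First, $\sF$ is \emph{pure} of dimension one: the resolution \eqref{F} is a length-one locally free resolution, so $\operatorname{pd}\sF_x\le 1$ at every point, and Auslander--Buchsbaum on the two-dimensional regular local rings $\sO_x$ gives $\operatorname{depth}\sF_x\ge 1$, whence $\sF$ has no subsheaf supported in dimension zero (cf. Remark \ref{pure}). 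Second, since $\det M\not\equiv 0$, the map $M$ is injective and $\sF$ is one-dimensional with set-theoretic support $S$.

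Next comes the local computation. Fix a component $C_i$ with generic point $\eta_i$; as $C_i$ has codimension one in the smooth surface, $R_i:=\sO_{\oP^1\times\oP^1,\eta_i}$ is a discrete valuation ring with uniformizer $q_i$. Over the principal ideal domain $R_i$ the square matrix $M$ has nonzero determinant, so its Smith normal form reads $M=U\,\mathrm{diag}(q_i^{e_1},\dots,q_i^{e_n})\,V$ with $U,V\in GL_n(R_i)$ and $0\le e_1\le\cdots\le e_n$. Hence $\sF_{\eta_i}=\Coker M\cong\bigoplus_j R_i/(q_i^{e_j})$ and $\operatorname{Ann}\sF_{\eta_i}=(q_i^{e_n})$. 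It then remains to match the top exponent with the minimal-polynomial data: each invariant factor $q_i^{e_j}$ is the determinant of the corresponding Jordan block of $M$ along $C_i$, and a Jordan block of size $a$ with eigenvalue $q_i^{b}$ has determinant $q_i^{ab}$ and cyclic cokernel $R_i/(q_i^{ab})$, so the exponents are exactly the products $a_jb_j$ and $e_n=\max_j a_jb_j=r_i$. Thus $\operatorname{Ann}\sF_{\eta_i}=(q_i^{r_i})$ for every $i$.

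Finally I globalize. For one inclusion, $p_M\sF$ is a coherent subsheaf of $\sF$ which vanishes at every generic point $\eta_i$ of $\supp\sF$ (there $p_M$ is a unit times $q_i^{r_i}$, and $q_i^{r_i}$ already annihilates $\sF_{\eta_i}$ by the previous paragraph); hence $p_M\sF$ is supported in dimension zero, and purity forces $p_M\sF=0$, i.e. $(p_M)\subseteq\operatorname{Ann}\sF$. For the reverse inclusion, let $f\in(\operatorname{Ann}\sF)_x$ be a local section. At each height-one prime $\mathfrak p$ of $\sO_x$ we have $v_{\mathfrak p}(f)\ge v_{\mathfrak p}(p_M)$: this is the equality $\operatorname{Ann}\sF_{\eta_i}=(q_i^{r_i})$ when $\mathfrak p$ is one of the $C_i$, and is automatic otherwise, since then $p_M$ is a unit at $\mathfrak p$. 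Consequently $f/p_M$ is regular in codimension one on $\operatorname{Spec}\sO_x$, and because $\sO_x$ is regular, hence normal and $S_2$, this forces $f/p_M\in\sO_x$, i.e. $f\in(p_M)_x$. Combining the two inclusions yields $\operatorname{Ann}\sF=(p_M)$, which is precisely the assertion that $\supp\sF=\bigl(S,\sO_{\oP^1\times\oP^1}/(p_M)\bigr)$. (Alternatively, one could show local principality of $\operatorname{Ann}\sF$ directly from $M\cdot\mathrm{adj}(M)=\det M\cdot I$, writing the local generator as $\det M$ divided by the greatest common divisor of the entries of $\mathrm{adj}(M)$, and then reading off its valuations from the same Smith form.)

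The main obstacle is the identification in the second paragraph of the largest invariant factor $q_i^{e_n}$ over $R_i$ with $q_i^{r_i}$, that is, reconciling the Smith normal form with the stated Jordan-block description of $p_M$. The delicate point is that the residue field $\kappa(\eta_i)$ at the generic point of $C_i$ need not be algebraically closed, so the ``eigenvalue $q_i^{b_i}$'' and the Jordan decomposition have to be read over a suitable extension (e.g. after completing $R_i$ and passing to $\overline{\kappa(\eta_i)}$), and one must verify that the top invariant factor, equivalently the generator of $\operatorname{Ann}\sF_{\eta_i}$, is unchanged under this base change. Once this dictionary between invariant factors and the products $a_jb_j$ is in place, everything else is formal.
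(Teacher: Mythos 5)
The paper offers no proof of this proposition at all --- it is stated with a terminal $\Box$, the authors evidently regarding it as immediate from the local structure of $\Coker M$ --- so there is nothing to compare against line by line; what can be said is that your argument is correct and supplies exactly the justification the paper omits. Your three ingredients are the right ones: purity of $\sF$ from the length-one locally free resolution \eqref{F} via Auslander--Buchsbaum (ruling out zero-dimensional associated points), the Smith normal form of $M$ over the discrete valuation ring at the generic point of each component $C_i=\{q_i=0\}$ (giving $\operatorname{Ann}\sF_{\eta_i}=(q_i^{e_n})$ with $q_i^{e_n}$ the top invariant factor), and the globalization of the two inclusions using purity in one direction and normality ($R_1+S_2$) of the regular local rings in the other. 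You are also right to single out the one genuinely delicate point: the paper's definition of $r_i$ is phrased informally in terms of ``Jordan blocks of size $a_i$ with eigenvalue $q_i^{b_i}$ at a generic point,'' and the honest content of the proposition is that the generator of $\operatorname{Ann}\sF_{\eta_i}$ is the largest invariant factor of $M$ over $\sO_{\eta_i}$; your computation that a size-$a$ block with eigenvalue $q^b$ has cyclic cokernel $R/(q^{ab})$, hence contributes exponent $ab$, is the correct dictionary, and the annihilator is insensitive to the base change to $\overline{\kappa(\eta_i)}$ needed to make the Jordan language literal. The only cosmetic caveat is that the Smith decomposition $\bigoplus_j R_i/(q_i^{e_j})$ does not coincide factor-by-factor with a Jordan decomposition (a single block yields a single cyclic summand), but since only the maximum exponent enters $p_M$, this does not affect the conclusion $\operatorname{Ann}\sF=(p_M)$.
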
 

\medskip

Let us now fix the support $S$. For simplicity, we shall assume that it is an {\em integral} curve in the linear system $|\sO(k,l)|$ on $\oP^1\times \oP^1$, i.e. $S$ is given by an irreducible polynomial $P(\zeta,\eta)$ of bidegree $(k,l)$, $k,l\geq 1$. This immediately implies that the rank of $\sF$ is constant, i.e. $\sF$ is locally free. Theorem \ref{acyclic} and Corollary \ref{acyclic2} imply
\begin{corollary} Let $P(\zeta,\eta)$ be an irreducible polynomial of bidegree $(k,l)$, and $S=\{(\zeta,\eta);\; P(\zeta,\eta)=0\}$ the corresponding integral curve of genus $g=(k-1)(l-1)$. There exists a canonical biholomorphism
 $$ \Jac^{g-1}(S)-\Theta \simeq \left\{M\in \sA(k,l);\; \det M=P\right\}/GL_{n}(\cx)\times GL_k(\cx)\times GL_l(\cx).$$
\end{corollary}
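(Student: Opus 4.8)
The plan is to exhibit a biholomorphism between $\Jac^{g-1}(S) \setminus \Theta$ and the quotient by constructing maps in both directions and checking they are mutually inverse holomorphic maps. The key observation is that for an integral curve $S$ of bidegree $(k,l)$, the adjunction formula on $\oP^1 \times \oP^1$ gives $K_S = K_Q(S)|_S = \sO(k-2, l-2)|_S$, so that a line bundle $L$ on $S$ with the right degree can be viewed through the resolution \eqref{F}. First I would set up the correspondence via Theorem \ref{acyclic}: given $M \in \sA(k,l)$ with $\det M = P$, the cokernel $\sF$ is a $1$-dimensional acyclic sheaf supported on $S$, and since $S$ is integral, Proposition \ref{S} shows $\sF$ is a torsion-free rank-one sheaf on $S$, hence a line bundle $L$ (as $S$ is integral and $\sF$ is pure). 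I would compute its degree: pushing \eqref{F} to $S$ and taking Euler characteristics, acyclicity $\chi(\sF) = 0$ together with Riemann-Roch on $S$ (using $g = (k-1)(l-1)$) forces $\deg L = g - 1$. The acyclicity condition $H^0(\sF) = H^1(\sF) = 0$ is then precisely the statement that $L \in \Jac^{g-1}(S)$ lies off the theta divisor $\Theta = \{L : h^0(L) > 0\}$.

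Next I would verify that the correspondence is well-defined on quotients: two matrices $M, M'$ related by the $GL_n \times GL_k \times GL_l$ action of \eqref{action} yield isomorphic cokernels, hence the same line bundle $L$, so the map $\{M : \det M = P\}/G \to \Jac^{g-1}(S)\setminus\Theta$ descends. For injectivity, Corollary \ref{acyclic2} already gives that the orbit is determined by the isomorphism class of $\sF$, and $\sF$ is recovered from $L$ (since on an integral curve the pushforward to $Q$ of a line bundle on $S$ is determined by $L$). For surjectivity, given any $L \in \Jac^{g-1}(S) \setminus \Theta$, I would regard $i_* L$ as a $1$-dimensional sheaf on $Q$ with support $S$; acyclicity of $L$ (i.e. $L \notin \Theta$) makes $i_* L$ acyclic on $Q$, so Theorem \ref{acyclic} supplies a resolution \eqref{F}, and the Hilbert polynomial computation identifies $(h^0(\sF(0,1)), h^0(\sF(1,0))) = (k,l)$, placing the resolving matrix in $\sA(k,l)$ with $\det M = P$ up to scalar (which I normalise).

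The main obstacle I expect is showing that the bijection is in fact a \emph{biholomorphism} rather than merely a set-theoretic bijection — that is, promoting the constructions to holomorphic maps of the relevant complex spaces and verifying the quotient on the matrix side is a smooth (indeed, geometric) quotient carrying a natural complex structure. Concretely, I would need to exhibit a universal family: over the space $\{M : \det M = P\} \subset \sA(k,l)$ the cokernels assemble into a flat family of sheaves on $Q \times \{M\}$, inducing a classifying morphism to the relative Jacobian; the universal property of $\Jac^{g-1}(S)$ (or the fine moduli space of line bundles, which exists since $S$ is integral) then makes the map holomorphic, and holomorphy of the inverse follows from the canonical, cohomologically-defined nature of the resolution in Theorem \ref{acyclic} (the resolution depends holomorphically on $L$ because it is built from the bigraded module $\bigoplus H^0(\sF(i,j))$, whose ranks are locally constant away from $\Theta$). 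Verifying that the $GL_n \times GL_k \times GL_l$-action is free on the locus $\det M = P$ (which follows from $\sF$ being simple, as a line bundle on an integral curve) ensures the quotient is a manifold of the expected dimension $g$, matching $\dim \Jac^{g-1}(S) = g$.
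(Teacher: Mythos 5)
Your argument takes essentially the same route as the paper, which deduces this corollary directly from Theorem \ref{acyclic} and Corollary \ref{acyclic2}: matrices with $\det M=P$ give acyclic sheaves supported on the integral curve $S$, which (as the paper also asserts) are line bundles, necessarily of degree $g-1$ by Riemann--Roch and lying off $\Theta$ by acyclicity, with the orbit correspondence supplied by Corollary \ref{acyclic2}. The one slip is your claim that the $GL_n(\cx)\times GL_k(\cx)\times GL_l(\cx)$-action is free on $\{\det M=P\}$: the scalars $(\lambda,\lambda,\lambda)$ always act trivially, so simplicity of $\sF$ only gives stabiliser $\cx^\ast$ rather than trivial, which does not affect the quotient or the dimension count $g=(k-1)(l-1)$.
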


Similarly, let $\sU_S(r,d)$ be the moduli space of semistable vector bundles (locally free sheaves) on $S$. For $d=r(g-1)$ define the generalised theta divisor $\Theta$ as the set of bundles with nonzero section. Then we have:
\begin{corollary} Let $P(\zeta,\eta)$ be an irreducible polynomial of bidegree $(k,l)$, and $S=\{(\zeta,\eta);\; P(\zeta,\eta)=0\}$ the corresponding integral curve of genus $g=(k-1)(l-1)$. There exists a canonical biholomorphism
 $$ \sU_S(r,r(g-1))-\Theta \simeq \left\{M\in \sA(kr,lr);\; \det M=P^r\right\}/GL_{nr}(\cx)\times GL_{kr}(\cx)\times GL_{lr}(\cx).$$
\end{corollary}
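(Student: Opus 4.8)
The plan is to reduce the statement to the previous corollary, which handled the rank-one case $r=1$, by setting up a correspondence between semistable rank-$r$ bundles on $S$ and $1$-dimensional acyclic sheaves on $\oP^1\times \oP^1$ supported on the ``thickened'' curve. The key observation is that a locally free sheaf $\sV$ of rank $r$ on the integral curve $S$, pushed forward under the closed embedding $\iota: S \hookrightarrow \oP^1\times \oP^1$, becomes a $1$-dimensional sheaf $\sF = \iota_\ast \sV$ on $\oP^1\times \oP^1$ whose support (as a set) is $S$ but whose scheme-theoretic support is the $r$-th infinitesimal neighborhood structure dictated by $P^r$. The Hilbert polynomial computation, via Riemann--Roch on the curve, should give $P_\sF(x,y) = (lr)x + (kr)y$, so that applying Theorem \ref{acyclic} and Corollary \ref{acyclic2} produces a resolution by a matrix $M \in \sA(kr,lr)$, and Proposition \ref{S} identifies the minimal polynomial: since $\sV$ is locally free of rank $r$ on the \emph{reduced} integral curve, the generic Jordan structure of $M$ along each branch has a single block of size $r$, forcing $\det M = P^r$.

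First I would establish the acyclicity and purity conditions needed to invoke Theorem \ref{acyclic}. The theta divisor $\Theta$ was defined precisely as the locus of bundles with $H^0(S,\sV)\neq 0$; since $\deg \sV = r(g-1)$ gives $\chi(\sV) = 0$ by Riemann--Roch, the complement $\sU_S(r,r(g-1))-\Theta$ consists exactly of those $\sV$ with $H^0(S,\sV) = H^1(S,\sV) = 0$. The task is to translate this into acyclicity of $\sF=\iota_\ast(\sV\otimes \text{(twist)})$ on $\oP^1\times \oP^1$: because $\iota$ is a closed embedding, $H^\ast(\oP^1\times\oP^1, \iota_\ast \sW) = H^\ast(S,\sW)$ for any sheaf $\sW$ on $S$, so acyclicity of the pushforward is literally the vanishing of curve cohomology after an appropriate normalization of degree. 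I would then check that the bijection of Corollary \ref{acyclic2} restricts to a biholomorphism between the GIT-type quotient on the sheaf side and the moduli space $\sU_S(r,r(g-1))$ on the curve side, using that semistability of $\sV$ on $S$ matches the acyclicity/purity of $\sF$ noted in the remark after Theorem \ref{acyclic}.

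The most delicate step, and the main obstacle, is the two-way correspondence between sheaves on $\oP^1\times\oP^1$ supported on the fat curve $P^r=0$ and rank-$r$ bundles on the reduced curve $S$. In one direction, pushforward along $\iota$ is clean, but the converse — showing that \emph{every} acyclic $1$-dimensional sheaf with minimal polynomial $P^r$ and the right Hilbert polynomial arises from a locally free rank-$r$ sheaf on $S$ — requires controlling the scheme structure. Since $S$ is integral and hence reduced and irreducible, purity of $\sF$ forces $\sF$ to be torsion-free over $\sO_S$, and because $S$ is integral (so has a single generic point with local ring a DVR at the smooth locus) the generic rank being $r$ together with torsion-freeness should force $\sF$ to be the pushforward of a rank-$r$ bundle; but one must verify that the condition $\det M = P^r$ (rather than some $P^{r'}$ with multiplicities absorbed differently) precisely pins down constant generic rank $r$ via the Jordan-block analysis in Proposition \ref{S}. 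Establishing that this matching is a biholomorphism of moduli spaces, and not merely a bijection on points, will require comparing the natural complex-analytic (or algebraic) structures on both quotients, most cleanly by exhibiting the construction in families and appealing to the universal property of the moduli functor on each side.
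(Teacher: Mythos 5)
The paper offers no separate proof of this corollary---it is presented as an immediate consequence of Theorem \ref{acyclic}, Corollary \ref{acyclic2} and Proposition \ref{S}---and your route (push forward a semistable bundle $\sV$ with $\chi(\sV)=0$, identify acyclicity of the pushforward with $\sV\notin\Theta$, compute the Hilbert polynomial $(lr)x+(kr)y$ by Riemann--Roch, and read off the matrix from the canonical resolution) is exactly the intended one. One point in your first paragraph is internally inconsistent and should be corrected: if $\sV$ is locally free of rank $r$ on the reduced integral curve $S$, then the annihilator of the pushforward is $(P)$, so its scheme-theoretic support is the reduced curve $S$ itself, not an infinitesimal thickening, and at a generic point the matrix $M$ is equivalent to $r$ Jordan blocks of size $1$ (minimal polynomial $P$), not a single block of size $r$; the identity $\det M=P^r$ follows simply from the facts that $\det M$ has bidegree $(kr,lr)$, vanishes set-theoretically exactly on $S$, and $P$ is irreducible. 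A single Jordan block of size $r$ would instead describe a rank-one sheaf on the non-reduced curve $P^r=0$, which is precisely the kind of sheaf your converse direction must rule out---so the Jordan-block dichotomy of Proposition \ref{S} is the right tool there, but with the roles reversed from how you state them.
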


\section{A geometric resolution}

There is a much more geometric way of constructing resolution  \eqref{F}, which works  under mild assumptions on the sheaf $\sF$ (cf. \cite{BSch} for the case of $\sigma$-sheaves).
\begin{definition} Let $\sF$ be a $1$-dimensional sheaf on $\oP^1\times \oP^1$ and let $\pi_1,\pi_2:\oP^1\times \oP^1\to \oP^1$ be the two projections. We say that $\sF$ is {\em  bipure}, if $\sF$ has no nontrivial coherent subsheaves supported on $\{z\}\times \oP^1$ or on $\oP^1\times \{z\}$ for any $z\in \oP^1$.
\label{bipure} \end{definition} 

Let now $\sF$ be an acyclic and bipure sheaf on $\oP^1\times \oP^1$ with Hilbert polynomial $lx+ky$. As in the proof of Theorem \ref{acyclic}, we consider the sheaf $\sE=\sF(1,1)$. Let $D_\zeta$ and $D_\eta$ denote the divisors $\{\zeta\}\times \oP^1$, $\oP^1\times \{\eta\}$.   We set
\begin{equation} V_\zeta= \{s\in H^0(\sE); s|_{D_\zeta}=0\}, \quad  W_\eta=\{s\in H^0(\sE); s|_{D_\eta}=0\}.\label{VW}\end{equation}
For any $\zeta$ and $\eta$, consider the maps
$$  \sE(-1,0)\to \sE, \quad \sE(0,-1)\to \sE,$$
given by multiplication by global non-zero sections of $\sO(1,0)$ and $\sO(0,1)$, vanishing at $\zeta$ and $\eta$, respectively. Since $\sE$ is bipure, these maps are injective, and therefore $V_\zeta\simeq H^0(\sE(-1,0))$, $W_\eta\simeq H^0(\sE(0,-1))$ for any $\zeta,\eta$. In particular $\dim V_\zeta=k$, $\dim W_\eta=l$, for any $\zeta$ and $\eta$. Therefore, $\zeta\mapsto V_\zeta$ and $\eta\mapsto W_\eta$ are subbundles of $H^0(\sE)\otimes \sO$ on $\oP^1$. They are isomorphic to $H^0(\sE(-1,0))\otimes \sO(-1)$, and to $H^0(\sE(0,-1))\otimes \sO(-1)$. The isomorphism is realised explicitly  via the map:
 $H^0(\sE(-1,0))\otimes \sO(-1)\to H^0(\sE)\otimes \sO$,
defined as \begin{equation*}H^0(\sE(-1,0))\otimes \sO(-1)\ni (s,(a,b)) \stackrel{m}{\longmapsto} (b\zeta-a)s\in H^0(\sE)\label{subbundle}\end{equation*} (here $(a,b)\in l$, where $l$ is the fibre of $\sO(-1)$ over $[l]$), and similarly for the subbundle $W$. We now define a vector bundle $U$ on $\oP^1\times \oP^1$, the fibre of which at $\zeta,\eta)$ is $V_\zeta\oplus W_\eta$, i.e.:
$$ U\simeq \left(H^0(\sE(-1,0))\otimes \sO(-1,0)\right)\oplus \left(H^0(\sE(0,-1))\otimes \sO(0,-1)\right).$$
  We obtain an injective map of sheaves $\sU\to H^0(\sE)\otimes \sO$. Let $\sG$ be the cokernel, i.e.
\begin{equation} 0\to \sU\longrightarrow H^0(\sE)\otimes \sO\longrightarrow \sG\to 0.\label{U}\end{equation}
We claim that $\sG\simeq \sE$, and so \eqref{U} is a natural resolution of $\sE$. To prove this, tensor the resolution \eqref{F} by $\sO(1,1)$ to obtain:
\begin{equation}0\to \sO(-1,0)^{\oplus k}\oplus \sO(0,-1)^{\oplus l}\stackrel{M(\zeta,\eta)}{\longrightarrow}\sO^{\oplus (k+l)}\to \sE\to 0. \label{FE}\end{equation}
Clearly, the middle term is identified with $H^0(\sE)\otimes\sO$. For any $\zeta_0$, consider the image  of $M(\zeta_0,\eta)$ restricted to $\sO(-1,0)^{\oplus k}|_{\zeta_0}\oplus 0$. This image does not depend on $\eta$, and 
since $\sF$ is bipure, it is exactly $V_{\zeta_0}$, defined in \eqref{VW}, i.e. sections vanishing on $\zeta_0\times \oP^1$. Similarly, for any $\eta_0$, the image of $M(\zeta,\eta_0)$ restricted to $0\oplus \sO(0,-1)^{\oplus l}|_{\eta_0}$ is precisely $W_{\eta_0}$. Hence, there are canonical isomorphisms between both first and second terms in resolutions \eqref{U} and \eqref{FE}, which commute with the horizontal maps. Therefore $\sG\simeq \sE$.

\section{Poisson structure and orbits of loop groups}

According to Corollary \ref{acyclic2}, acyclic sheaves with Hilbert polynomial $lx+ky$ correspond to orbits of $GL_{k+l}(\cx)\times GL_k(\cx)\times GL_l(\cx)$ on $\sA(k,l)$, where $\sA(k,l)$ is the set of polynomial matrices defined in \eqref{M11} and the action is given in \eqref{action}. 
\par
We now make the following assumption about the sheaf $\sF$:
\begin{equation} (\infty,\infty)\not\in \supp \sF.\label{infty}\end{equation}
This can be, of course, always achieved via an automorphism of $\oP^1\times \oP^1$. In terms of the matrix $M(\zeta,\eta)$ corresponding to $\sF$, \eqref{infty} means that $\det(A_1, B_1)\neq 0$. We can, therefore, use the action of $GL_{k+l}(\cx)$ to make $(A_1,B_1)$ equal to minus the identity matrix, so that $M(\zeta,\eta)$ becomes
\begin{equation}
\begin{pmatrix} X-\zeta & F \\ G & Y-\eta \end{pmatrix}, \enskip X\in \Mat_{k,k}(\cx),\,Y\in \Mat_{l,l}(\cx),\, G,F^T\in \Mat_{l,k}(\cx).
\label{M12}\end{equation}
The residual group action is that of conjugation by the block-diagonal $GL_k(\cx)\times GL_l(\cx)$. We denote this group by $K$. 
\begin{remark} We are, essentially, in the situation of \cite{AHP}. The only difference is that we do not fix $X$ or $Y$.
\end{remark}
We denote by $\sM(k,l)$ the space of all matrices of the form \eqref{M12}, which we identify with quadruples $(X,Y,F,G)$ as above. The action of $K=GL_k(\cx)\times GL_l(\cx)$ on $\sM(k,l)$ is given by
\begin{equation} (g,h).(X,Y,F,G)=(gXg^{-1}, hYh^{-1}, gFh^{-1}, hGg^{-1}).\label{action2}\end{equation}
Let us also write $\sS(k,l)$ for the set of isomorphism classes of acyclic sheaves with Hilbert polynomial $lx+ky$ on $\oP^1\times \oP^1$, which satisfy \eqref{infty}. The content of Corollary \ref{acyclic2} is that there exists a natural bijection
\begin{equation}
 \sM(k,l)/K\simeq \sS(k,l).\label{biject}
\end{equation}

\subsection{Poisson structure\label{Poisson}}
The vector space $\Mat_{k,l}\times \Mat_{l,k}$ has a natural $K$-invariant symplectic structure: $\omega=\tr(dF\wedge dG)$. On the other hand, $\Mat_{k,k}\simeq \gl_k(\cx)^\ast$ and $\Mat_{l,l}\simeq \gl_l(\cx)^\ast$ have canonical Poisson structures, and therefore, $\sM(k,l)$ has a natural $K$-invariant Poisson structure. If $\sM(k,l)^0$ is the subset of $\sM(k,l)$, on which the action of $K$ is free and proper, then $\sM(k,l)^0/K$ is a Poisson manifold, and, consequently, we obtain a Poisson structure on the corresponding subset of acyclic sheaves with Hilbert polynomial $lx+ky$ and satisfying \eqref{infty}. We shall now want to describe symplectic leaves of $\sM(k,l)^0/K$ in terms of sheaves on $\oP^1\times \oP^1$.
\par
First of all, let us describe sheaves corresponding to symplectic leaves in $\sM(k,l)$. Such a leaf is determined by fixing conjugacy classes of $X$ and $Y$. On the other hand, conjugacy classes of $k\times k$ matrices correspond to isomorphism classes of torsion sheaves on $\oP^1$, of length $k$. This correspondence is given by associating to a matrix $X\in \Mat_{k,k}(\cx)$ the sheaf $\sG$ via
\begin{equation} 0\to \sO(-1)^{\oplus k}\stackrel{X-\zeta}{\longrightarrow} \sO^{\oplus k}\to \sG\to 0.\label{torsion}\end{equation}
If, for example, $X$ is diagonalisable with distinct eigenvalues $\zeta_1,\dots,\zeta_r$ of multiplicities $k_1,\dots, k_r$, then $\sG\simeq \bigoplus_{i=1}^r \cx^{k_i}|_{\zeta_i}$, i.e. $\sG|_{\zeta_i}$  is the skyscraper sheaf of rank $k_i$.

\begin{proposition} Let $P$ be a conjugacy class of $k\times k$ matrices.
 The bijection \eqref{biject} induces a bijection between
\begin{itemize}
 \item[(i)] orbits of $GL_k(\cx)\times GL_l(\cx)$ on $\{(X,Y,F,G)\in \sM{(k,l)};\: X\in P\}$, and
\item[(ii)] isomorphism classes of sheaves $\sF$ in $\sS(k,l)$ such that $\sF|_{\eta=\infty}$ is isomorphic to $\sG$ defined by \eqref{torsion}.
\end{itemize}\label{infinity}
\end{proposition}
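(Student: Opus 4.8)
The plan is to unwind the bijection \eqref{biject} and show that the two conditions—$X\in P$ on the matrix side, and $\sF|_{\eta=\infty}\simeq \sG$ on the sheaf side—correspond under it. The key observation is that restricting the sheaf $\sF$ to the divisor $\eta=\infty$ is the geometric counterpart of extracting the block $X-\zeta$ from the normal form \eqref{M12}. First I would recall that, under the assumption \eqref{infty}, the sheaf $\sF$ is encoded (up to the residual $K$-action) by the matrix $M(\zeta,\eta)$ in the normalised form \eqref{M12}, and that two quadruples $(X,Y,F,G)$ give isomorphic sheaves if and only if they lie in the same $K$-orbit via \eqref{action2}. Thus it suffices to prove that, for a fixed quadruple, the restriction $\sF|_{\eta=\infty}$ depends only on the conjugacy class of $X$, and conversely that this restriction recovers $X$ up to conjugacy via the presentation \eqref{torsion}.

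The central computation is to restrict the resolution \eqref{F} to the divisor $D=\{\eta=\infty\}\simeq\oP^1$. Since \eqref{infty} guarantees $(\infty,\infty)\notin\supp\sF$, the divisor $D$ meets $\supp\sF$ only at finite points, and $\sF$ is locally free along a neighbourhood of the relevant locus in the $\zeta$-direction, so that tensoring the resolution by $\sO_D$ stays exact (the higher Tor vanishes because the block $Y-\eta$ of $M$ is invertible at $\eta=\infty$ after the normalisation, making the restricted map injective). Concretely, I would work with the version \eqref{FE} tensored by $\sO(1,1)$, restrict to $\eta=\infty$, and observe that the $\sO(0,-1)^{\oplus l}$ summand together with the $F,G,Y-\eta$ entries drops out or becomes an isomorphism onto a direct summand in the limit $\eta\to\infty$, leaving precisely the presentation
$$ 0\to \sO(-1)^{\oplus k}\stackrel{X-\zeta}{\longrightarrow}\sO^{\oplus k}\to \sF|_{\eta=\infty}\to 0 $$
on $\oP^1$. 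Comparing with \eqref{torsion}, this identifies $\sF|_{\eta=\infty}$ with the sheaf $\sG$ attached to $X$, and since the cokernel of $X-\zeta$ determines $X$ up to conjugation (this being the standard equivalence between length-$k$ torsion sheaves on $\oP^1$ and conjugacy classes of $k\times k$ matrices), the conjugacy class of $X$ is recovered.

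With this identification in hand, the equivalence of (i) and (ii) follows formally. An orbit of $K=GL_k(\cx)\times GL_l(\cx)$ inside $\{X\in P\}$ maps, under \eqref{biject}, to an isomorphism class of sheaves whose restriction to $\eta=\infty$ is $\sG$, by the restriction computation above; the condition $X\in P$ is exactly $\sF|_{\eta=\infty}\simeq\sG$. Conversely, any $\sF\in\sS(k,l)$ with $\sF|_{\eta=\infty}\simeq\sG$ arises from a quadruple whose $X$-block presents $\sG$ as in \eqref{torsion}, hence has $X\in P$; and the fibre of \eqref{biject} over the isomorphism class of $\sF$ is a single $K$-orbit, so the two sets are in canonical bijection.

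I expect the main obstacle to be the exactness of the restriction to $\eta=\infty$, i.e. verifying that $\operatorname{Tor}_1\bigl(\sF,\sO_D\bigr)=0$ and that the restricted matrix map remains injective so that the cokernel genuinely computes $\sF|_{\eta=\infty}$. This is where the hypothesis $(\infty,\infty)\notin\supp\sF$ and the bipurity built into the normal form \eqref{M12} do the real work: one must check that the block $Y-\eta$ becoming invertible at $\eta=\infty$ forces the $\sO(0,-1)^{\oplus l}$ part of the resolution to split off cleanly upon restriction, rather than contributing spurious torsion. Handling the degenerate case where $\supp\sF$ is tangent to, or singular along, fibres near $D$ would require the argument that $\sF$ has no subsheaf supported on a fibre $\oP^1\times\{\infty\}$, which is precisely guaranteed once \eqref{infty} holds for the given support.
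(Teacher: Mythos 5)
Your proposal is correct and follows essentially the same route as the paper: the paper's proof simply restricts the normalised matrix \eqref{M12} to $\eta=\infty$, where it becomes $\begin{pmatrix} X-\zeta & 0\\ G & -1\end{pmatrix}$, so that the invertible $-1$ block splits off and the cokernel reduces to the presentation \eqref{torsion} of $\sG$ by $X-\zeta$. Your additional care about exactness of the restriction (vanishing of $\operatorname{Tor}_1$, guaranteed because the restricted matrix still has non-vanishing determinant) fills in a step the paper leaves implicit, but the underlying argument is the same.
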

\begin{proof}
 At $\eta=\infty$, the matrix \eqref{M12} becomes $\begin{pmatrix} X-\zeta & 0\\ G & -1     \end{pmatrix}$. The statement follows from \eqref{torsion} and \eqref{F}.
\end{proof}

Therefore symplectic leaves on $\sM(k,l)$ correspond to fixing isomorphism classes of $\sF|_{\eta=\infty}$ and of  $\sF|_{\zeta=\infty}$. Symplectic leaves on $\sM(k,l)^0/K$ are of course smaller than $K$-orbits of symplectic leaves on $\sM(k,l)^0$. They are obtained by fixing $X$ and $Y$ and taking the symplectic quotient of $\Mat_{k,l}\times \Mat_{l,k}$ by ${\rm Stab}(X)\times {\rm Stab}(Y)$. We shall describe sheaves corresponding to a particular symplectic leaf in the case when $X$ and $Y$ are diagonalisable.

\subsection{Orbits of $GL_k(\cx)$ and matrix-valued rational maps\label{rat}}
We consider now only the action of $GL_k(\cx)\simeq GL_k(\cx)\times \{1\}\subset K$ on $\sM(k,l)$. We fix a semisimple conjugacy class of $X$, i.e. we suppose that $X$ is diagonalisable, with distinct eigenvalues $\zeta_1,\dots,\zeta_r$ of multiplicities $k_1,\dots,k_r$. The stabiliser of $X$ is then isomorphic to $\prod_{i=1}^r GL_{k_i}(\cx)$. If the action of $GL_k(\cx)$ is to be free, we must have $k_i\leq l$, $i=1,\dots,r$. Let us diagonalise $X$, so that $X$ has the block-diagonal form $(\zeta_1 \cdot 1_{k_1\times k_1},\dots, \zeta_r \cdot 1_{k_r\times k_r})$, and let $F_i,G_i$ denote the $k_i\times l$ and $l\times k_i$  submatrices of $F,G$ such that  rows of $F$  and the columns of $G$ have the same coordinates as the block $\zeta_i \cdot 1_{k_i\times k_i}$. The action of $GL_k(\cx)$ is free and proper at $(X,Y,F,G)$ if and only if  $\rank F_i=\rank G_i=k_i$ for $i=1,\dots,r$.
\par
As in \cite{AHP, AHH1}, we can associate to each element of $\sM(k,l)$ a $\Mat_{l,l}(\cx)$-valued rational map:
\begin{equation} 
 R(\zeta)= Y+G(\zeta-X)^{-1}F.\label{RR}
\end{equation}
The mapping $(X,Y,F,G)\mapsto R(\zeta)$ is clearly $GL_k(\cx)$-invariant. If $X$ is diagonalisable, as above, i.e. $X=(\zeta_1 \cdot 1_{k_1\times k_1},\dots, \zeta_r \cdot 1_{k_r\times k_r})$, then
\begin{equation} R(\zeta)=Y+ \sum_{i=1}^r\frac{G_iF_i}{\zeta-\zeta_i}.\label{RFG}\end{equation}
We clearly have:
\begin{lemma} Let $P$ be a semisimple conjugacy class of $k\times k$ matrices with eigenvalues $\zeta_1,\dots,\zeta_r$ of multiplicities $k_1,\dots,k_r$. 
The map $(X,Y,F,G)\mapsto R(\zeta)$ induces a bijection between 
\begin{itemize}
 \item[(i)] $GL_k(\cx)$-orbits on $\{(X,Y,F,G)\in \sM(k,l)^0;\: X\in P\}$, and 
\item[(ii)]  the set $\sR_l(P)$ of all rational maps of the form
$$ R(\zeta)=Y+ \sum_{i=1}^r\frac{R_{i}}{\zeta-\zeta_i},$$
where $\rank R_i=k_i$. \hfill $\Box$
\end{itemize}\label{ss}\end{lemma}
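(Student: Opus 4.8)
The plan is to use the $GL_k(\cx)$-invariance of $(X,Y,F,G)\mapsto R(\zeta)$ (already noted before the statement) to get an induced map $\bar\Phi$ on orbits, and then to produce an explicit inverse. Since $P$ is a single conjugacy class, every $GL_k(\cx)$-orbit meeting $\{X\in P\}$ contains a representative with $X$ in the fixed block-diagonal form $X_0=\diag(\zeta_1\cdot 1_{k_1},\dots,\zeta_r\cdot 1_{k_r})$, so I would work throughout with such representatives. For these, the residual freedom is exactly the stabiliser $\prod_{i=1}^r GL_{k_i}(\cx)$ of $X_0$ inside $GL_k(\cx)$, acting blockwise by $F_i\mapsto g_iF_i$, $G_i\mapsto G_ig_i^{-1}$, and this manifestly fixes each residue $G_iF_i$; this bookkeeping makes the invariance transparent in coordinates.

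First I would check that the image lands in $\sR_l(P)$. For a representative with $X=X_0$, formula \eqref{RFG} exhibits $R(\zeta)=Y+\sum_i(G_iF_i)/(\zeta-\zeta_i)$ as a partial-fraction expansion with residues $R_i=G_iF_i$. The freeness criterion quoted just above the statement gives $\rank F_i=\rank G_i=k_i$; as $F_i$ is then surjective and $G_i$ injective, $\rank(G_iF_i)=k_i$, so $R\in\sR_l(P)$. For surjectivity, given $R(\zeta)=Y+\sum_i R_i/(\zeta-\zeta_i)$ with $\rank R_i=k_i$, I would pick a rank factorisation $R_i=G_iF_i$ with $G_i$ of full column rank $k_i$ and $F_i$ of full row rank $k_i$, assemble $F,G$ from the blocks, and set $X=X_0$. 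The resulting quadruple lies in $\sM(k,l)^0$ by the same criterion and maps to $R$.

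Injectivity is the only step with genuine content, and it reduces to \emph{uniqueness} of the rank factorisation. Because the poles $\zeta_i$ are distinct, the decomposition $R=Y+\sum_i R_i/(\zeta-\zeta_i)$ is the unique partial-fraction expansion (with $Y=\lim_{\zeta\to\infty}R(\zeta)$), so two representatives mapping to the same $R$ share the same $Y$ and the same residues $G_iF_i=G_i'F_i'$. The key lemma to establish is: if $G_iF_i=G_i'F_i'$ with all four factors of full rank $k_i$, then there is $g_i\in GL_{k_i}(\cx)$ with $G_i'=G_ig_i^{-1}$ and $F_i'=g_iF_i$. This holds because $\im G_i=\im(G_iF_i)=\im G_i'$ (using surjectivity of $F_i,F_i'$), so the two injections $G_i,G_i'$ have equal $k_i$-dimensional image and hence differ by a unique $g_i\in GL_{k_i}(\cx)$; injectivity of $G_i$ then forces $F_i'=g_iF_i$. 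The tuple $(g_1,\dots,g_r)$ is precisely an element of $\prod_i GL_{k_i}(\cx)$ carrying one representative to the other, so the two quadruples lie in a single $GL_k(\cx)$-orbit.

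I expect this rank-factorisation uniqueness to be the main obstacle, though it is elementary; everything else amounts to reading off $Y$ and the residues from the partial-fraction expansion and tracking the explicit block action of the stabiliser. A minor point to verify carefully is that the freeness condition $\rank F_i=\rank G_i=k_i$ is used in both directions — to guarantee the residues have rank $k_i$ (forward) and to guarantee the factorisation produces a point of $\sM(k,l)^0$ (inverse) — so that $\bar\Phi$ is a bijection between the two orbit sets and not merely onto a larger target.
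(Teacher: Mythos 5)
Your argument is correct and is exactly the content the paper leaves implicit (the lemma is stated with ``We clearly have'' and no proof): invariance under $GL_k(\cx)$, partial fractions to recover $Y$ and the residues $R_i=G_iF_i$, and uniqueness of the full-rank factorisation $R_i=G_iF_i$ up to the stabiliser $\prod_i GL_{k_i}(\cx)$, which is precisely the residual gauge freedom. No gaps; the only caveat, which you already flag, is that the freeness criterion $\rank F_i=\rank G_i=k_i$ must be invoked in both directions to match the two sides of the bijection.
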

                                                                                     
\subsection{Orbits of loop groups} A rational map of the form \eqref{RR}  may be viewed as an element of a loop Lie algebra $\widetilde{\gl}(l)^-$, consisting of maps from a circle $S^1$ in $\cx$, containing the points $\zeta_i$ in its interior, which extend holomorphically outside $S^1$ (including $\infty$). The group $\widetilde{GL}(l)^+$, consisting of smooth maps $g:S^1\to GL_l(\cx)$, extending holomorphically to the interior of $S^1$, acts on $\widetilde{\gl}(l)^-$ by pointwise conjugation, followed by projection to $\widetilde{\gl}(l)^-$.   In particular, if all eigenvalues of $X$ are distinct, then the action is 
$$ g(\zeta) . \left(Y+ \sum_{i=1}^r\frac{R_{i}}{\zeta-\zeta_i}\right)=Y+  \sum_{i=1}^r \frac{g(\zeta_i)R_{i}g(\zeta_i)^{-1}}{\zeta-\zeta_i}.$$
Therefore, if we fix conjugacy classes of the $R_i$, we obtain an orbit of $\widetilde{GL}(l)^+$ in $\widetilde{\gl}(l)^-$. 
We shall now consider  quotients of such  orbits by ${\rm Stab}(Y)$ and describe which sheaves correspond to elements of such an orbit. Let us give a name to such quotients:
\begin{definition} The quotient of an orbit of $\widetilde{GL}(l)^+$ in $\widetilde{\gl}(l)^-$ by $GL_l(\cx)$ is called a {\em semi-reduced orbit}.\label{semi}
\end{definition}
\begin{remark} In the literature (see, e.g. \cite{AHH1}--\cite{AHP}) a {\em reduced} orbit is the symplectic quotient of an orbit by $H_Y={\rm Stab}(Y)$. The $GL_l(\cx)$-moment map on $\widetilde{\gl}(l)^-$ is identified with $Y+\sum_{i=1}^r R_i$, so that a reduced orbit is obtained by fixing the value of $a=\pi\bigl(\sum_{i=1}^r R_i\bigr)$, where $\pi$ is the projection $\gl_l(\cx)\to  \gl_l(\cx)/\fH_Y^\perp$ (with $\perp$ is taken with respect to $\tr$), and dividing by ${\rm Stab}(a)\subset {\rm Stab}(Y)$. Therefore, if ${\rm Stab}(Y)$ fixes $a$, then a reduced orbit can be identified with a subset of a semi-reduced orbit.\label{reduced}
\end{remark}
\par
Let us, therefore, fix a semi-reduced orbit of $\widetilde{GL}(l)^+$. We choose $r$ distinct points $\zeta_1,\dots,\zeta_r$ in $\cx$. Furthermore, we choose $r+1$ conjugacy classes $Q_0,Q_1,\dots, Q_r$ of $l\times l$ matrices. This data determines a semi-reduced orbit $\Upsilon=\Upsilon(Q_0,\dots,Q_r)$ of $\widetilde{GL}(l)^+$ defined as
\begin{equation} \Upsilon=\left\{R(\zeta)=Y+ \sum_{i=1}^r\frac{R_{i}}{\zeta-\zeta_i};\; Y\in Q_0,\,\forall_{i\geq 1} R_i\in Q_i\right\}/GL_l(\cx). \label{orbit}\end{equation}
Let
\begin{equation} k_i=\rank Q_i,\enskip i=1,\dots, r,\quad k=\sum_{i=1}^r k_i.\label{kk}\end{equation}
In the notation of Lemma \ref{ss}, $\Upsilon\subset \sR_l(P)$, where $P$ is the semisimple conjugacy class of $k\times k$ matrices
with eigenvalues $\zeta_i$ of multiplicities $k_i$.
\par
Thanks to Proposition \ref{infinity}, the conjugacy class $P$ determines $\sF|_{\eta=\infty}$, which, in the case at hand, is $\bigoplus_{i=1}^r {\cx^{k_i}}|_{(\zeta_i,\infty)}$. Similarly, $Q_0$ determines the isomorphism class of $\sF|_{\zeta= \infty}$. We now discuss the significance of the other conjugacy classes $Q_1,\dots,Q_l$. 
\par
We claim that they determine the isomorphism class of $\sF|_{\eta^2=\infty}$, i.e. of $\sF$ restricted to the first order neighbourhood of $\eta=\infty$.
Indeed, consider again the canonical resolution \eqref{F} of $\sF$ with $M(\zeta,\eta)$ given by \eqref{M12}. Let $\tilde\eta=1/\eta$ be a local coordinate near $\eta=\infty$, so that 
$$M(\zeta,\tilde\eta)=\begin{pmatrix} 
                       X-\zeta & \tilde\eta F\\ G & \tilde\eta Y-1
                      \end{pmatrix}.$$
Using action \eqref{action}, we can multiply $M(\zeta,\tilde\eta)$ on the right by $\begin{pmatrix} 1 & 0 \\ 0 & (1-\tilde\eta Y)^{-1}\end{pmatrix}$. On the scheme $\tilde\eta^2=0$, we have $(1-\tilde\eta Y)^{-1}=1+\tilde\eta Y$, and so $M(\zeta,\tilde\eta)$ becomes (on $\tilde\eta^2=0$):
\begin{equation*}\begin{pmatrix} X-\zeta & \tilde\eta F\\ G & -1
                      \end{pmatrix}.\label{m2}\end{equation*}
To describe $\sF|_{\tilde\eta^2=0}$, it is enough to describe it near each $\zeta_i$, i.e. to describe $\sG_i=\sF|_{U_i\times \{\tilde\eta^2=0\}}$, where $U_i$ is an open neighbourhood of $\zeta_i$ (not containing the other $\zeta_j$). The resolution \eqref{F} of $\sF$ restricted to $U_i\times \{\tilde\eta^2=0\}$ becomes
$$ \begin{CD}0\to \sO(-2,-1)^{\oplus k_i}\oplus \sO(-1,-2)^{\oplus l} @> M_i(\zeta,\tilde\eta)>>
\sO(-1,-1)^{\oplus(k_i+l)}\longrightarrow \sG_i\to 0,\end{CD}
$$ where
$$ M_i(\zeta,\tilde\eta)=\begin{pmatrix} 
                       \zeta_i-\zeta & \tilde\eta F_i\\ G_i & -1
                      \end{pmatrix}.$$
This implies that we have an exact sequence
\begin{equation}\begin{CD} 0 \to \sO(-2,-1)^{\oplus k_i} @> (\zeta_i-\zeta)+ \tilde\eta F_i G_i>>  \sO(-1,0)^{\oplus k_i} @>>> \sG_i \to 0,
  \end{CD}\label{G_i}\end{equation}
on $U_i\times \{\tilde\eta^2=0\}$. Therefore $\sG_i$ is determined by the $GL_{k_i}(\cx)$-conjugacy class of $F_iG_i$, which is the same as the $GL_l(\cx)$-conjugacy class of $G_iF_i$. Lemma \ref{ss} and formula \eqref{RFG} imply that the conjugacy class of $G_iF_i$ is $Q_i$. Thus, the conjugacy classes $Q_1,\dots,Q_r$, which determine the orbit \eqref{orbit}, correspond to the isomorphism class of $\sF|_{\tilde\eta^2=0}$. Observe that the support of $\sG_i$ is given by $\det((\zeta_i-\zeta)+ \tilde\eta F_i G_i)=0$. In other words, the eigenvalues of $F_iG_i$ give $\frac{\zeta-\zeta_i}{\tilde\eta}$ at $(\zeta,\tilde{\eta})=(\zeta_i,0)$, i.e. the first order neighbourhood of $\supp \sF$ at $(\zeta_i,\infty)$.
\par
Summing up, we have:
\begin{theorem} There exists a natural bijection between elements of the semi-reduced rational orbit \eqref{orbit} of $\,\widetilde{GL}(l)^+$ in $\widetilde{\gl}(l)^-$ and isomorphism classes of $1$-dimensional acyclic sheaves $\sF$ on $\oP^1\times \oP^1$ such that
\begin{itemize}
\item[(i)] the Hilbert polynomial of $\sF$ is $P_{\sF}(x,y)=lx+ky$.
\item[(ii)] $(\infty,\infty)\not \in \supp S$, and  $\sF|_{\eta=\infty}\simeq \bigoplus_{i=1}^r {\cx^{k_i}}|_{(\zeta_i,\infty)}$.
\item[(iii)] The isomorphism class of $\sF|_{\zeta=\infty}$ corresponds to $Q_0$, as in Proposition \ref{infinity}.
\item[(iv)] The isomorphism class of $\sF|_{\eta^2=\infty}$ corresponds to conjugacy classes $Q_1,\dots,Q_r$, as described above. \hfill $\Box$
\end{itemize}
\end{theorem}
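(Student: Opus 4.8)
The plan is to realise the asserted bijection as a composite of three correspondences already established: Lemma \ref{ss}, the tautological bijection \eqref{biject}, and the restriction computations of Proposition \ref{infinity} and of the exact sequence \eqref{G_i}. I would first observe that the $GL_l(\cx)$ appearing in the semi-reduced quotient is exactly the residual group action. Indeed, setting $g=1$ in \eqref{action2}, the subgroup $\{1\}\times GL_l(\cx)\subset K$ sends $(X,Y,F,G)$ to $(X,hYh^{-1},Fh^{-1},hG)$, and the rational map \eqref{RR} is carried to $hR(\zeta)h^{-1}$. Hence, under the bijection of Lemma \ref{ss} between $GL_k(\cx)$-orbits on $\{(X,Y,F,G)\in\sM(k,l)^0;\ X\in P\}$ and $\sR_l(P)$, this action corresponds to $GL_l(\cx)$-conjugation, which is precisely the conjugation defining a semi-reduced orbit. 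Imposing the additional constraints $Y\in Q_0$ and $G_iF_i\in Q_i$ for all $i$ cuts $\sR_l(P)$ down to the numerator of \eqref{orbit}, where (by \eqref{kk}) $k_i=\rank Q_i$ and $P$ is the semisimple class with eigenvalues $\zeta_i$ of multiplicity $k_i$. Therefore $\Upsilon$ is in natural bijection with $K$-orbits on
$$ \{(X,Y,F,G)\in\sM(k,l)^0;\ X\in P,\ Y\in Q_0,\ G_iF_i\in Q_i\ (i=1,\dots,r)\}. $$

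Next I would feed these $K$-orbits through \eqref{biject}, obtaining isomorphism classes of acyclic sheaves in $\sS(k,l)$; by construction this already forces the Hilbert polynomial to be $lx+ky$ with $k=\sum_{i=1}^r k_i$, giving (i), and it encodes the standing hypothesis $(\infty,\infty)\notin\supp\sF$ of (ii). It would then remain to match each of the three matrix constraints with the corresponding sheaf condition. The class $X\in P$ translates, via Proposition \ref{infinity} applied to the semisimple $X$, into $\sF|_{\eta=\infty}\simeq\bigoplus_{i=1}^r\cx^{k_i}|_{(\zeta_i,\infty)}$, completing (ii). The class $Y\in Q_0$ translates, via the form of Proposition \ref{infinity} with the roles of $\zeta$ and $\eta$ interchanged, into the prescription of the isomorphism class of $\sF|_{\zeta=\infty}$, which is (iii). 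Finally, the classes $G_iF_i\in Q_i$ translate, via the exact sequence \eqref{G_i}, into the isomorphism class of $\sF|_{\eta^2=\infty}$: the restriction $\sG_i=\sF|_{U_i\times\{\tilde\eta^2=0\}}$ is determined by the $GL_{k_i}(\cx)$-conjugacy class of $F_iG_i$, equal to the $GL_l(\cx)$-class $Q_i$ of $G_iF_i$, and assembling the $\sG_i$ over $i=1,\dots,r$ recovers all of $\sF|_{\eta^2=\infty}$; this is (iv). Composing the three bijections yields the statement.

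The main point requiring care will be that each translation is genuinely a bijection of isomorphism classes, not merely an implication in one direction. For (iv) this means establishing the converse of the computation preceding the theorem: that the isomorphism class of $\sG_i$ recovers the conjugacy class of $F_iG_i$. This is a local form of the classical equivalence used in \eqref{torsion}, between torsion $\sO$-modules and matrices up to conjugacy, now read off the linear pencil $(\zeta_i-\zeta)+\tilde\eta F_iG_i$ in \eqref{G_i}. For this equivalence to return the full class $Q_i$ of $G_iF_i$ one uses that on $\sM(k,l)^0$ the freeness of the $K$-action forces $\rank F_i=\rank G_i=k_i$, so that $F_iG_i$ and $G_iF_i$ share identical nonzero Jordan data and $Q_i$ (of rank $k_i$) is recovered unambiguously; this rank condition is itself guaranteed by $R_i\in Q_i$ with $\rank Q_i=k_i$. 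Granting this, injectivity and surjectivity of the composite are automatic from those of its factors.
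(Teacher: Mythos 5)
Your proposal is correct and follows essentially the same route as the paper: the theorem there is stated as a summary of the immediately preceding discussion, which likewise composes Lemma \ref{ss}, the bijection \eqref{biject}, Proposition \ref{infinity}, and the computation of $\sF|_{\tilde\eta^2=0}$ via the sequence \eqref{G_i}. Your extra care in checking that the passage between the $GL_{k_i}(\cx)$-class of $F_iG_i$ and the $GL_l(\cx)$-class $Q_i$ of $G_iF_i$ is genuinely bijective under the full-rank hypotheses is a point the paper asserts without elaboration, so it is a welcome addition rather than a deviation.
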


\begin{remark} A variation of this result is probably well known to the integrable systems community (at least when $\sF$ is a line bundle supported on a smooth curve $S$). We think it useful, however, to state it in this language and in full generality.
\end{remark}

\subsection{Symplectic leaves of $\sM(k,l)^0/K$}
We can finally describe symplectic leaves of $\sS(k,l)$, i.e. sheaves corresponding to a particular symplectic leaf $L$ in $\sM(k,l)/K$, at least in the case when $L\subset \sM(k,l)^0/K$, and $X$ and $Y$ are semisimple. As we already mentioned in \S\ref{Poisson}, a symplectic leaf in $\sM(k,l)^0/K$ is obtained by fixing  $X$ and $Y$, as well as a coadjoint orbit $\Lambda\subset \fH^\ast$ of $H={\rm Stab}(X)\times {\rm Stab}(Y)$. If $\mu: \Mat_{k,l}\times \Mat_{l,k}\to \fH^\ast$ is the moment map for $H$, then the symplectic leaf determined by these data is:
\begin{equation}
 L=\{(X,Y,F,G)\in \sM(k,l)^0;\, \text{$X$ and $Y$ are given},\enskip \mu(F,G)\in \Lambda\}/H.\label{leaf}
\end{equation}
Let $X$ be diagonal, written as in \S\ref{rat}, i.e. $X=(\zeta_1 \cdot 1_{k_1\times k_1},\dots, \zeta_r \cdot 1_{k_r\times k_r})$ and let $F_i,G_i$, $i=1,\dots, r$, be the corresponding submatrices of $F$ and $G$. Then ${\rm Stab}(X)\simeq \prod_{i=1}^r GL_{k_i}(\cx)$, and the moment map is the projection of the $GL_k(\cx)$-moment map, i.e. $(F,G)\mapsto FG$, onto the Lie algebra of ${\rm Stab}(X)$. In other words, the ${\rm Stab}(X)$-moment map can be identified with \cite{AHP}:
\begin{equation}\mu_X(F,G)=(F_1G_1,\dots,F_rG_r).\label{mX}\end{equation}
Similarly, if $Y$ is diagonal with $s$ distinct eigenvalues of multiplicities $l_1,\dots,l_s$, then we obtain $l_i\times k$ and $k\times l_i$ submatrices
$G^i,F^i$. The stabiliser of $Y$ is isomorphic to $ \prod_{i=1}^s GL_{l_i}(\cx)$ and the moment map is
\begin{equation}\mu_Y(F,G)=(G^1F^1,\dots,G^sF^s).\label{mY}\end{equation}
Therefore, an orbit $\Lambda$ corresponds to $r+s$ conjugacy classes $\pi_1,\dots,\pi_r,\rho_1,\dots,\rho_s$ of $k_i\times k_i$ matrices for the $\pi_i$, and $l_j\times l_j$ matrices for the $\rho_j$.
The leaf $L$ will be contained in $\sM(k,l)^0/K$ if and only if each conjugacy class consists of matrices of maximal rank ($k_i$ or $l_j$).
From the discussion in the previous subsection, we immediately obtain:
\begin{proposition}
 Let $L$ be a symplectic leaf of the Poisson manifold $\sM(k,l)^0/K$, defined as in \eqref{leaf} with semisimple $X$ and $Y$. Then the image of $L$ under the bijection \eqref{biject} consists of isomorphism classes of sheaves $\sF$ in $\sS(k,l)$ such that the isomorphism class of $\sF|_{\zeta^2=\infty}$ and of $\sF|_{\eta^2=\infty}$ is fixed (and determined by $L$).\hfill $\Box$\label{LL}
\end{proposition}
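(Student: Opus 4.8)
The plan is to combine the two restriction computations already established in the previous subsections with the explicit description of the moment maps \eqref{mX}--\eqref{mY}. The essential observation is that a symplectic leaf $L$ as in \eqref{leaf} is cut out by three pieces of data: the semisimple conjugacy class of $X$, the semisimple conjugacy class of $Y$, and the $H$-coadjoint orbit $\Lambda$, which by the moment-map formulae unpacks into the conjugacy classes $\pi_1,\dots,\pi_r$ of $F_iG_i$ and $\rho_1,\dots,\rho_s$ of $G^jF^j$. The strategy is to show that each of these pieces of data is equivalent, under the bijection \eqref{biject}, to fixing part of the isomorphism class of $\sF$ along the first-order neighbourhoods of the two lines at infinity.

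First I would treat the $\eta$-direction. Fixing the semisimple class of $X$ (the eigenvalues $\zeta_i$ with multiplicities $k_i$) determines $\sF|_{\eta=\infty}\simeq\bigoplus_{i=1}^r\cx^{k_i}|_{(\zeta_i,\infty)}$ by Proposition \ref{infinity}. Fixing the classes $\pi_i=[F_iG_i]$ then determines the first-order thickening: this is precisely the content of the computation \eqref{G_i}, where $\sF|_{\tilde\eta^2=0}$ near $\zeta_i$ is resolved by $(\zeta_i-\zeta)+\tilde\eta F_iG_i$ and hence recovered from the $GL_{k_i}(\cx)$-conjugacy class of $F_iG_i$. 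So fixing $X$ together with the orbit-data $\pi_1,\dots,\pi_r$ is exactly the same as fixing the isomorphism class of $\sF|_{\eta^2=\infty}$. By the symmetry $\zeta\leftrightarrow\eta$ of the whole setup (which exchanges the roles of $X$ and $Y$, of $F$ and $G$, and of $k$ and $l$), fixing the semisimple class of $Y$ together with the conjugacy classes $\rho_j=[G^jF^j]$ is the same as fixing the isomorphism class of $\sF|_{\zeta^2=\infty}$, via the mirror of the argument in \S4.3.

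Assembling these, the leaf $L$ is carved out by data equivalent to prescribing both $\sF|_{\eta^2=\infty}$ and $\sF|_{\zeta^2=\infty}$, which is the claim. The main subtlety to verify is the bookkeeping between the full moment map $\mu=(\mu_X,\mu_Y)$ and the two thickenings: one must check that the $H=\mathrm{Stab}(X)\times\mathrm{Stab}(Y)$ quotient in \eqref{leaf} matches the isomorphism-class identifications, i.e. that no extra gluing information along the first-order neighbourhoods is lost or gained when passing to the quotient. Since $X,Y$ are semisimple and the residual group is block-diagonal, the first-order data near each $(\zeta_i,\infty)$ and near each $(\infty,\eta_j)$ decouples, so that the conjugacy classes $\pi_i,\rho_j$ account for exactly the local isomorphism types and nothing more; the point $(\infty,\infty)\notin\supp\sF$ guarantees the two thickenings do not interact. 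Granting this decoupling — which is the one place where the argument could hide a genuine obstacle — the proposition follows, and one invokes the phrase \emph{From the discussion in the previous subsection} to package the two symmetric computations.
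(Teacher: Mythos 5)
Your proposal is correct and follows essentially the same route as the paper: the paper's proof is exactly the preceding discussion, namely Proposition \ref{infinity} plus the computation \eqref{G_i} identifying the conjugacy classes $\pi_i=[F_iG_i]$ with the isomorphism type of $\sF|_{\eta^2=\infty}$ near each $(\zeta_i,\infty)$, applied symmetrically in $\zeta$ and $\eta$. The decoupling point you flag is the same local-at-each-point reasoning the paper uses when it ``spells things out'' after the proposition, so there is no divergence.
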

Spelling things out, $X$ determines  $\sF|_{\eta=\infty}\simeq \bigoplus_{i=1}^r {\cx^{k_i}}|_{(\zeta_i,\infty)}$, and each $\pi_i$, $i=1,\dots,r$, determines $\sF$ restricted to a neighbourhood of $(\zeta_i,\infty)$ in $\{\eta^2=\infty\}$ via \eqref{G_i}. Similarly, $Y$ and the $\rho_j$ determine $\sF|_{\zeta^2=\infty}$.

\begin{remark} Symplectic leaves of $\sM(k,l)^0/K$ can be also identified with reduced orbits (cf. Definition \ref{reduced}) of $\widetilde{GL}(l)^+$ in $\widetilde{\gl}(l)^-$. Therefore, the last proposition describes sheaves corresponding to a reduced orbit with $Y$ semisimple. Furthermore, if we view $\sM(k,l)^0/K$ as an open subset of the moduli space of semistable sheaves with Hilbert polynomial $lx+ky$, then this map is a symplectomorphism between the Mukai-Tyurin-Bottacin symplectic structure, described in the introduction, and the Kostant-Kirillov form on a reduced orbit of a Lie group. For an open dense set, where $\sF$ is a line bundle on a smooth curve, this follows from results in \cite{AHH2,AHH4}. Since both symplectic structures extend everywhere, they are must be isomorphic everywhere.
\end{remark}

\begin{example}
 If we want $\sF$ to be a line bundle over its support, then we must require that all $k_i$ and all $l_j$ are equal to $1$. A symplectic leaf in $\sM(k,l)^0/K$ is now given by fixing  diagonal matrices $X=\diag(\zeta_1,\dots,\zeta_k)$ and $Y=\diag(\eta_1,\dots,\eta_l)$ with all $\zeta_i$ and all $\eta_j$  distinct,  as well as the diagonal entries of $FG$ and $GF$, and quotienting by the group of $(k+l)\times (k+l)$ diagonal matrices (acting as in  \eqref{action2}). If the diagonal entries of $FG$ are fixed to be  $\alpha_1,\dots, \alpha_k$, and the diagonal entries of $GF$ are $\beta_1,\dots, \beta_l$, then the corresponding subset of $\sS(k,l)$ consists of sheaves $\sF$ supported on a $1$-dimensional scheme $S$ such that
$$ S\cap \{\eta^2=\infty\}=\bigcup_{i=1}^k \left\{\zeta-\zeta_i=\frac{\alpha_i}{\eta}\right\},\quad 
S\cap \{\zeta^2=\infty\}=\bigcup_{j=1}^l \left\{\eta-\eta_j=\frac{\beta_{j}}{\zeta} \right\}
$$
and the rank of $\sF$ restricted to $S\cap \{\eta^2=\infty\}$ and $S\cap \{\eta^2=\infty\}$ is everywhere $1$.
\end{example}

\begin{remark}
 We expect that Proposition \ref{LL} remains true if $X$ or $Y$ are not semisimple.
\end{remark}

\section{Rank $k$ perturbations}

Let us now assume that $k\leq l$. In \cite{AHH1}, the authors consider Hamiltonian flows on a subset $\sM$ of $\sM^0(k,l)/K$, where 
$\rank F=\rank G=k$. It is clear from the previous section that a generic symplectic leaf of $\sM^0(k,l)/K$ is not contained in $\sM$. Therefore a flow may leave $\sM$ without becoming singular. Since such Hamiltonian flows on a particular symplectic leaf can be linearised on the Jacobian of a spectral curve, it is interesting to know which points of the (affine) Jacobian are outside of $\sM$. We are going to give a very satisfactory answer to this, in terms of cohomology of line bundles.

Let us therefore define the following set:
\begin{equation} 
\sM(k,l)^1=\left\{M\in \sM(k,l)\,;\; \rank F=\rank G=k\right\}.\label{Sm} 
\end{equation}
\begin{remark} The manifold of $GL_k(\cx)$-orbits in $\sM(k,l)^1$  with $X=0$ and fixed $Y$, can be identified with the set $\{Y+GF\}$, i.e. with the {\em space of rank $k$ perturbations of the matrix $Y$}, as considered first by Moser \cite{Moser} ($k=2$), and, then by many other authors, in particular  Adams, Harnad, Hurtubise, Previato \cite{AHP,AHH1}.
\end{remark}

We now ask which acyclic sheaves on $\P^1\times \oP^1$ correspond to  orbits of $K=GL_k(\cx)\times GL_l(\cx)$ on $\sM(k,l)^1$.  We have:
\begin{proposition} Let $k\leq l$. The bijection of Corollary \ref{acyclic2} induces a bijection between: \begin{itemize}\item[(i)]  orbits of $GL_k(\cx)\times GL_l(\cx)$ on $\sM(k,l)^1$, and \item[(ii)] isomorphism classes of acyclic sheaves $\sF$ on $\oP^1\times \oP^1$ with Hilbert polynomial $P_{\sF}(x,y)=lx+ky$, which satisfy, in addition, \eqref{infty} and
$$ H^0(\sF(-1,1))=0 \enskip \text{and} \enskip H^1(\sF(1,-1))=0.$$
\end{itemize}
\end{proposition}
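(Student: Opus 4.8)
The plan is to translate the rank condition $\rank F=\rank G=k$ directly into the vanishing of the two cohomology groups, using the resolution \eqref{F} (equivalently \eqref{FE}) and the fact that the matrix $M(\zeta,\eta)$ has the explicit block form \eqref{M12}. Since the bijection of Corollary \ref{acyclic2} is already established, and \eqref{infty} is assumed on both sides, what remains is to show, for a sheaf $\sF$ satisfying \eqref{infty}, the equivalence
$$
\bigl(\rank F=\rank G=k\bigr)\iff\bigl(H^0(\sF(-1,1))=0\ \text{and}\ H^1(\sF(1,-1))=0\bigr).
$$
Because $K=GL_k(\cx)\times GL_l(\cx)$ acts on $\sM(k,l)^1$ and the ranks of $F$ and $G$ are $K$-invariant, both sides descend to orbits, so it suffices to prove the equivalence at the level of a single quadruple $(X,Y,F,G)$.

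First I would compute $H^0(\sF(-1,1))$ and $H^1(\sF(1,-1))$ from the resolution. Tensoring \eqref{F} by $\sO(-1,1)$ gives
\begin{equation*}
0\to \sO(-3,0)^{\oplus k}\oplus \sO(-2,-1)^{\oplus l}\stackrel{M}{\longrightarrow}\sO(-2,0)^{\oplus(k+l)}\to \sF(-1,1)\to 0,
\end{equation*}
and tensoring by $\sO(1,-1)$ gives
\begin{equation*}
0\to \sO(-1,-2)^{\oplus k}\oplus \sO(0,-3)^{\oplus l}\stackrel{M}{\longrightarrow}\sO(0,-2)^{\oplus(k+l)}\to \sF(1,-1)\to 0.
\end{equation*}
In the first sequence the summand $\sO(-2,0)^{\oplus(k+l)}$ has $H^0=0$, while $\sO(-3,0)^{\oplus k}$ and $\sO(-2,-1)^{\oplus l}$ both have $H^0=0$ and the only surviving cohomology is $H^1(\sO(-3,0))\neq 0$. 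Hence the long exact sequence identifies $H^0(\sF(-1,1))$ with the kernel of the connecting map into $H^1$ of the left-hand term, and a short computation shows $H^0(\sF(-1,1))$ is governed by how $M$ acts on the $\sO(-3,0)^{\oplus k}$ factor, i.e. by the block $\begin{pmatrix} X-\zeta\\ G\end{pmatrix}$ coupling the $k$ columns. The key point is that a nonzero element of $H^0(\sF(-1,1))$ exists precisely when $G$ fails to have full rank $k$: a drop in $\rank G$ produces a section of $\sF$ that is ``extra'' in the $\eta$-positive direction. Dually, by Serre duality on $\oP^1\times\oP^1$ (with $K_Q\simeq\sO(-2,-2)$),
$$
H^1(\sF(1,-1))\simeq H^0\!\left(\sE{\it xt}^1_Q(\sF,K_Q)(-1,1)\right)^\ast,
$$
and the reflexivity of $\sF$ (Remark \ref{pure}) exchanges the roles of $F$ and $G$, so that $H^1(\sF(1,-1))=0$ corresponds symmetrically to $\rank F=k$. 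Thus the two vanishings package exactly the two rank conditions.

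The cleanest route for the forward implication is to argue contrapositively and geometrically. Using \eqref{infty}, restrict $\sF$ to the line $\{\eta=\infty\}$: by Proposition \ref{infinity}, $\sF|_{\eta=\infty}$ is the torsion sheaf $\sG$ of \eqref{torsion} attached to $X$, supported on the eigenvalues of $X$. If $\rank G<k$, then there is a column direction in which the block $G$ of \eqref{M12} degenerates, forcing the cokernel to acquire a section of $\sO(-1,1)$-type; concretely, one exhibits $v\in\ker G$ and builds an explicit section of $\sF(-1,1)$ supported near $\{\eta=\infty\}$, contradicting $H^0(\sF(-1,1))=0$. Conversely, if $\rank G=k$ one checks that the coupling block $\begin{pmatrix} X-\zeta\\ G\end{pmatrix}$ is injective with locally free cokernel transverse to the $\sO(-3,0)$-cohomology, killing $H^0(\sF(-1,1))$. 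The argument for $F$ and $H^1(\sF(1,-1))$ is identical after applying the duality $\sF\mapsto\sE{\it xt}^1_Q(\sF,K_Q)$, which swaps $(\zeta,\eta)$-roles and interchanges $F$ with $G$.

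The main obstacle I anticipate is the bookkeeping in the long exact cohomology sequence for $\sF(-1,1)$: the left term $\sO(-3,0)^{\oplus k}\oplus\sO(-2,-1)^{\oplus l}$ has nonvanishing $H^1$ only in the $\sO(-3,0)$ summand, so $H^0(\sF(-1,1))$ is not simply the cokernel of an $H^0$-map but sits inside the kernel of a connecting homomorphism, and one must verify that this connecting map is exactly the transpose of (a restriction of) $G$ in suitable bases. Making this identification precise — so that ``$H^0(\sF(-1,1))=0$'' becomes literally ``$G$ is injective as a map $\cx^k\to\cx^l$'' — is the technical heart of the proof. Once the linear-algebra dictionary between the connecting map and the block $G$ (respectively $F$) is nailed down, both equivalences follow, and the fact that $k\leq l$ guarantees that full rank $k$ is attainable, so the set $\sM(k,l)^1$ is nonempty and the statement is not vacuous.
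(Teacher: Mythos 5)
Your strategy is correct but genuinely different from the paper's. You twist the canonical resolution \eqref{F} by $\sO(-1,1)$ and $\sO(1,-1)$ and extract the cohomology of $\sF(-1,1)$ and $\sF(1,-1)$ from the long exact sequence on $\oP^1\times\oP^1$; the paper instead introduces auxiliary sheaves $\sW_1,\sW_2$ on $\oP^1$, the cokernels of $(X-\zeta,G)^T$ and $(F,Y-\eta)^T$, translates the rank conditions into their splitting types ($\rank G=k$ iff $\sW_1\cong\sO(1)^{\oplus k}\oplus\sO^{\oplus(l-k)}$, etc.), and then uses the alternative resolutions $0\to\sO(0,-1)^{\oplus l}\to\pi_1^\ast\sW_1\to\sE\to 0$ and $0\to\sO(-1,0)^{\oplus k}\to\pi_2^\ast\sW_2\to\sE\to 0$ of $\sE=\sF(1,1)$. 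Your route does close, and is arguably more self-contained: since $H^0$ and $H^2$ of both outer terms of your twisted sequences vanish, one gets $H^0(\sF(-1,1))\cong\Ker\bigl(H^1(\sO(-3,0))^{\oplus k}\to H^1(\sO(-2,0))^{\oplus(k+l)}\bigr)$ --- note this is the kernel of the induced map on $H^1$, not of the connecting homomorphism (which is injective) --- and a short \v{C}ech computation shows a section $a+b\zeta$ induces $(u,v)\mapsto au+bv$ on $H^1(\sO(-3))\to H^1(\sO(-2))$, so the kernel is $\{(u,Xu):Gu=0\}\cong\Ker G$, as you predicted. Two remarks. First, the Serre-duality detour for the condition on $F$ is unnecessary and is the weakest step as written (you would need to identify the resolution of $\sE{\it xt}^1_Q(\sF,K_Q)$ with the transpose of $M$ and track all the twists); the very same long exact sequence for $\sF(1,-1)$ directly gives $H^1(\sF(1,-1))\cong\Coker\bigl(\cx^{2l}\to\cx^{k+l}\bigr)$ with the map $(u,v)\mapsto(Fu,\,Yu-v)$, whose cokernel is $\Coker F$, vanishing iff $\rank F=k$. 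Second, the $H^0$/$H^1$ asymmetry between the two conditions is precisely the hypothesis $k\leq l$: the same computation shows $H^1(\sF(-1,1))=0$ would instead force $\rank G=l$. What the paper's approach buys is a more geometric picture (the rank conditions become splitting conditions for sheaves on $\oP^1$); what yours buys is a purely mechanical verification from the resolution already in hand.
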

\begin{proof} Consider short exact sequences
$$ \begin{CD} 0\to \sO(-1)^{\oplus k} @>(X-\zeta, G)^T>> \sO^{\oplus (k+l)} @>>> \sW_1\to 0,
\end{CD}$$
$$ \begin{CD} 0\to \sO(-1)^{\oplus l} @>(F,Y-\eta)^T>> \sO^{\oplus (k+l)} @>>> \sW_2\to 0.
\end{CD}$$
The condition that $G$ has rank $k$ is equivalent to $\sW_1$ being a vector bundle, isomorphic to $\sO(1)^{\oplus k}\oplus \sO^{\oplus(l-k)}$. This is equivalent to $H^0(\sW_1\otimes \sO(-2))=0$. On the other hand, we claim that the condition that $F$ has rank $k$ is equivalent to $H^1(\sW_2\otimes \sO(-2))=0$. Indeed, any coherent sheaf on $\oP^1$ splits into sum of line bundles $\sO(i)$ and a torsion sheaf \cite{Q}.  Since $\sW_2$ has a resolution as above, we know that all degrees $i$ in the splitting are nonnegative, and $F$ has rank $k$ if and only if all $i$ are strictly positive, which is equivalent to $H^1(\sW_2\otimes \sO(-2))=0$.
\par
We can use the above exact sequences to obtain two further resolutions of $\sE=\sF(1,1)$:
\begin{equation}0 \to \sO(-1,0)^{\oplus k}\to  \pi_2^\ast \sW_2\to \sE\to 0,\label{R1}
\end{equation}
\begin{equation}0 \to\sO(0,-1)^{\oplus l}  \to  \pi_1^\ast \sW_1\to\sE\to 0,\label{R2}
\end{equation}
where the maps between first two terms are given by the embedding in $\sO^{\oplus (k+l)}$ followed by the projection onto the quotients 
$\sW_2$, $\sW_1$. Tensoring \eqref{R1} with $\sO(0,-2)$ shows that $H^1(\sW_2(-2))=0$ if and only if $H^1(\sE(0,-2))=0$, i.e. $H^1(\sF(1,-1)=0$. Similarly, tensoring \eqref{R2} with $\sO(-2,0)$ shows that $H^0(\sW_1(-2))=0$ if and only if $H^0(\sE(-2,0))=0$, i.e. $H^0(\sF(-1,1)=0$.
\end{proof}

\begin{remark} In the case $k=l$,  $H^0(\sE(-2,0))=0$ implies that $\sE(-2,0)$ is acyclic (and similarly, $H^1(\sE(0,-2))=0$ implies that $\sE(0,-2)$ is acyclic). In other words $\sG=\sE(-1,0)$ satisfies $H^\ast(\sG(-1,0))=H^\ast(\sG(0,-1))=0$. Furthermore, the resolution  \eqref{R2} becomes the following resolution of $\sG$:
\begin{equation}0\to \sO(-1,-1)^{\oplus k}\longrightarrow \sO^k\longrightarrow \sG\to 0.\label{R5}
\end{equation}
 In the case when $S=\supp \sG$ is smooth and $\sG$ is a line bundle, the corresponding part of $\Jac^{g+k-1}(S)$ and the resolution \eqref{R5} have been considered by Murray and Singer in \cite{MS2}.
\end{remark}


\begin{thebibliography}{99}

\bibitem{AHH1}
{M.R. Adams, J. Harnad, \and J. Hurtubise},
`Isospectral Hamiltonian flows in finite and infinite dimensions. II. Integration of flows', {\it Comm. Math. Phys.}  134  (1990), 555--585. 

\bibitem{AHH2}
{M.R. Adams, J. Harnad, \and J. Hurtubise},
`Darboux coordinates and Liouville-Arnold integration in loop algebras', {\it  Comm. Math. Phys.}  155  (1993),  385--413. 

\bibitem{AHH3}
{M.R. Adams, J. Harnad, \and J. Hurtubise}, `Dual moment maps into loop algebras',
{\it Lett. Math. Phys.} 20 (1990), 299--308.

\bibitem{AHH4}
{M.R. Adams, J. Harnad, \and J. Hurtubise},
 `Coadjoint orbits, spectral curves and Darboux coordinates', in {\it The geometry of Hamiltonian systems (Berkeley, 1989)},  9--21,  Springer, New York, 1991.

\bibitem{AHP}
{M.R. Adams, J. Harnad, \and E. Previato}, `Isospectral Hamiltonian flows in finite and infinite dimensions. I. Generalized Moser systems and moment maps into loop algebras',
{\it Comm. Math. Phys.} 117 (1988), 451--500. 

\bibitem{Beau2}
{A. Beauville}, `Determinantal hypersurfaces', {\it Michigan Math. J.} 48  (2000), 39--64.


\bibitem{BSch}
{R. Bielawski \and L. Schwachh\"ofer}, `Pluricomplex geometry and hyperbolic monopoles', arXiv:1104.2270

\bibitem{Bo}
{F. Bottacin},  `Poisson structures on moduli spaces of sheaves over Poisson surfaces' {\it Invent. Math.} 121 (1995), 421--436.


\bibitem{HL}
{D. Huybrechts \and M. Lehn}, {\it The geometry of moduli spaces of sheaves}, Vieweg \& Sohn, Braunschweig, 1997. 

\bibitem{HM1}
{J.C. Hurtubise \and E. Markman}, `Surfaces and the Sklyanin bracket',  
{\it Comm. Math. Phys.} 230 (2002), 485--502. 

\bibitem{HM2}
{J.C. Hurtubise \and E. Markman}, `Elliptic Sklyanin integrable systems for arbitrary reductive groups', {\it  Adv. Theor. Math. Phys.}  6  (2002),   873--978.




\bibitem{McL-S}
{D. Maclagan \and G. Smith}, `Multigraded Castelnuovo--Mumford regularity', {\it J. Reine Angew. Math.},  571  (2004), 179--212.






\bibitem{Moser}
{J. Moser}, `Geometry of quadrics and spectral theory', in: {\it The Chern Symposium, Berkeley 1979}, 147--188, Springer, New York-Berlin, 1980. 

\bibitem{Mu}
{S. Mukai}, `Symplectic structure of the moduli space of sheaves on an abelian or $SL(N)$ surface' {\it Invent. Math.} 77  (1984), 101--116. 

\bibitem{MS2}
{M.K. Murray \and M.A. Singer}, `On the complete integrability of the discrete Nahm equations', {\it Comm. Math. Phys.}  210  (2000),  497--519.


\bibitem{Q} {D. Quillen}, `Quaternionic algebra and sheaves on the Riemann sphere', {\it Quart. J. Math. Oxford Ser. (2)} 49 (1998), no. 194, 163--198.

\bibitem{SW}
{G. Sanguinetti \and N.M.J. Woodhouse}, `The geometry of dual isomonodromic deformations',   
{\em J. Geom. Phys.} 52 (2004), 44--56. 

\bibitem{Tyu}
{A.N. Tyurin}, `Symplectic structures on the varieties of moduli of vector bundles on algebraic surfaces with $p_{\rm g} > 0$', {\it Math. USSR Izvestiya} 33  (1989), 139–177.

\end{thebibliography}
\end{document}